\def\R{\mathbb{R}}
\def\S{\mathbb{S}}
\def\F{\mathcal{F}}
\newcommand{\E}{\mathbb{E}}
\newcommand{\Exp}{\mathrm{exp}}
\newcommand\numberthis{\addtocounter{equation}{1}\tag{\theequation}}
\newcommand{\norm}[1]{\left\lVert#1\right\rVert_{\infty}}
\def\Hess{\mathrm{Hess}}
\def\Ric{\mathrm{Ric}}
\theoremstyle{plain}
\newtheorem{theorem}{Theorem}[section]
\newtheorem{corollary}[theorem]{Corollary}
\newtheorem{lemma}[theorem]{Lemma}
\newtheorem{proposition}[theorem]{Proposition}
\theoremstyle{definition}
\newtheorem{definition}[theorem]{Definition}
\newtheorem{example}[theorem]{Example}
\newtheorem*{remark}{Remark}
\title{\textbf{Stein's Method for Probability Distributions on $\mathbb{S}^1$}}
\author{Alexander Lewis\thanks{School of Mathematical Sciences. E-mail: alexander.lewis1@nottingham.ac.uk} \\ \textit{University of Nottingham}}
\date{}
\begin{document}

\maketitle

\begin{abstract}
In this paper, we propose a modification to the density approach to Stein's method for intervals for the unit circle $\mathbb{S}^1$ which is motivated by the differing geometry of $\mathbb{S}^1$ to Euclidean space. We provide an upper bound to the Wasserstein metric for circular distributions and exhibit a variety of different bounds between distributions; particularly, between the von-Mises and wrapped normal distributions, and the wrapped normal and wrapped Cauchy distributions.
\end{abstract}

\section{Introduction}

Since its inception in 1972 \cite{stein1972bound}, Stein's method has been utilised in many research areas to provide a foundation for distributional comparison and approximation. The main objective of such a problem is to bound the (pseudo-) metric
\[ d_{\mathcal{H}}(P,Q):= \sup_{h \in \mathcal{H}}\bigg| \int h dP - \int h dQ \bigg| \]
between an arbitrary probability measure $Q$ and target probability measure $P$, both defined on the state space $\Omega$ with respect to some class of test functions $\mathcal{H}$ on $\Omega$. Stein's approach to this problem has three critical steps:
\begin{enumerate}
\item Find an operator $L$ such that, for a random variable $X \sim P$, $\E[L f(X)]=0$ for a class of functions $f \in \F$ with $f: \Omega \rightarrow \R$.
\item Formulate and solve the so-called Stein equation
\[ L f_h(x) = h(x) - \E[h(X)]. \]
\item Attempt to bound the left hand side of the Stein equation using the vast number of tools that the literature has to offer. This includes, but not limited to, exchangeable pairs, size-biasing, zero-biasing, and operator comparison.
\end{enumerate}
Depending on the approach taken, step 2 may not involve solving the Stein equation.
We point the reader towards the exposition \cite{ross2011fundamentals} which provides a comprehensive guide of Stein's method.

More recently, Stein's method has greatly expanded in scope to many distributional types; general univariate \cite{ley2017stein}, multivariate \cite{mackey2016multivariate,mijoule2018stein}, and manifold valued \cite{le2020diffusion,thompson2020approximation}.
One can split Stein's method into two distinct approaches: the classical Stein density approach, and the diffusion approach. In short, the density approach constructs the operator from the density of the target distribution itself, whereas the diffusion approach uses the infinitesimal generator of an over-damped Langevin diffusion.

The focus of this paper is to motivate and develop a Stein method for the manifold $\S^1 = \{x \in \R^2 : |x|^2=1\}$, and in particular, provide examples of bounds between popular distributions within directional statistics. We shall take a kernel based route similar to that of \cite{ley2017distances} which avoids the need to directly bound the solution to the Stein equation. We shall motivate as to why below.

\subsection{Motivation and Background}

As described in the Introduction,  recent development within the area of Stein's method \cite{le2020diffusion} and \cite{thompson2020approximation} have led to the ability to extend the diffusion method originally presented in \cite{mackey2016multivariate} to general manifolds. This particular construction of Stein's method, however, relies upon a sufficient condition which links the geometry of the manifold with the probability density $p = e^{-\phi}$, namely that
\begin{equation}\label{eq:sufficient condition}
 \Ric+ \Hess^\phi \geq 2\kappa g 
\end{equation}
for some $\kappa>0$ with $\Ric$ the Ricci curvature on the manifold and $\Hess^\phi$ the Hessian of $\phi$.
For $\S^1$, $\Ric = 0$ everywhere and so this condition is simplified to a convexity requirement on $\phi$ (a log-convexity condition on $p$).
\begin{remark}
This is the sufficient condition initially presented in \cite{le2020diffusion}. An identical condition is put forward in \cite{thompson2020approximation} which redefines the Bakry-\'Emery-Ricci tensor.
\end{remark}
However, many popular distributions that are used in directional statistics do not satisfy this sufficient condition for any choice of their canonical parameters; e.g. von-Mises, Bingham, uniform, cardioid and wrapped distributions. Therefore, the diffusion approach will not be utilised for our study. This motivates the need to use classical methods in order to construct a Stein method for distributions on $\S^1$.

One may think of applying already known density methods on an interval $[a,b]$ and identify this as the circle. However, one must appreciate that in order to equate the circle with an interval, it is neccessary to assign a wrapping at the endpoints of the interval. This means one can not simply employ general density methods discussed in, for example, \cite{chen2010normal}. Boundary conditions on $f_h$ and $p$ must be obtained for these methods to be applicable --- particularly, $\lim_{x \rightarrow a+}f(x)p(x) = \lim_{x \rightarrow b-} f(x)p(x)$. The von-Mises, Bingham and uniform distributions do not satisfy this boundary condition unless one restricts the function space for which the operator is defined on. Instead, we will modify the density approach to accommodate the geometry of $\S^1$, and we shall see that by the definition of continuous functions on $\S^1$, this condition is always satisfied for absolutely continuous $f$ and $p$.  Even though one may use an interval of arbitrary length to construct the circle, we shall, in this paper, use intervals of lengths $2\pi$ for simplicity.

The Stein kernel, which shall be defined later, has been shown \cite{ley2017distances,ghaderinezhad2018general} to provide another way to construct analytic bounds on the Wasserstein metric between two known distributions. For this reason, we shall be utilising the Stein kernel to bound the Wasserstein metric. This avoids the need to bound the solution to the Stein equation directly, which can sometimes yield large bounds. For example, when looking at the von-Mises distribution $X \sim VM( 0,\kappa)$, one will find that the solution to the Stein equation has the form (up to a constant proportional to $e^{-\kappa \cos x}$),
\[ f_h(x) = e^{-\kappa \cos(x)} \int_{-\pi}^x (h(u)-\E[h(X)])e^{\kappa \cos(u)} du.\]
This cannot be bounded via conventional means, i.e. using properties of the CDF, due to the oscillatory nature of the cosine function in the exponent. Bounding it directly using the Lipschitz continuity of $h$ will result in very large upper bounds. 
 In addition to this, one usually relies upon applying one or more of: exchangeable pairs, size-biasing, sum of variables and zero-biasing in order to bound the Wasserstein metric. This will not be needed when working with the kernel as our method --- reminiscent of \cite{ley2017distances} --- shall be directly comparing the operators to obtain an upper bound. As we shall see, there are limitations to which distributions we can compare, which is the price to pay for using this kernel based method.

\subsection{Main Results}

In this paper, we present a formulation for a Stein's method on $\S^1$. We develop a new Stein kernel --- named circular Stein kernel --- that is invariant to choice of coordinate system on $\S^1$. Let $X\sim P$ be a random variable on $\S^1$ with density function $p$. The circular Stein operator of $P$ is defined in a similar fashion to its Euclidean counterpart, $\mathcal{T}_p f = (fp)'/p$. We then go on to define the circular Stein kernel of $X$ which is defined differently as
\[ \tau^c(x) := \mathcal{T}_p^{-1}\sin(\mu-\mathrm{Id})= \int_{\mu-\pi}^x \sin(\mu-y)p(y) dy \]
where $\mu$ is the mean angle of $X$ defined as $\mathrm{Arg}(\E[e^{iX}])$. 
The first notable result is an upper bound on the Wasserstein metric: 
\newline
\textbf{Theorem \ref{thm: bayesian comparison}.}
\textit{ 
Let $X$ and $Y$ be random variables on $\S^1$ with Lebesgue densities $p_1,p_2$ respectively and $\mathrm{supp}(X)=\mathrm{supp}(Y)=\S^1$ and define $\pi_0(x)=\frac{p_2(x)}{p_1(x)}$. Furthermore, let $\mu$ be the mean angle of $X$ and $\tau^c$ be the circular Stein kernel of $X$. Assume that $p_1,p_2$ and $\pi_0$ are differentiable everywhere on $\S^1$.  
Then we have the following bounds on the Wasserstein metric between $X$ and $Y$:
\[ |\E[\tau^c(X)\pi_0'(X)]|\leq d_W(Y,X) \leq \E[|\alpha(X)\pi_0'(X)\tau^c(X)|],	\]
where 
\[ \alpha(x) = \dfrac{\int_{\mu-\pi}^x (\E[X]-y)p_1(y)dy}{\int_{\mu-\pi}^x\sin(\mu-y)p_1(y)dy}. \]
}

This is the main method of comparison that we employ for random variables on $\S^1$. The second, a direct consequence of the above Theorem, is a comparison between the von-Mises (VM) and wrapped normal (WN) distributions:
\newline
\textbf{Corollary \ref{thm:wrapped normal}.}
\textit{
Let $X~\sim VM(0,\kappa)$ and $Z \sim WN(0,\sigma^2)$. Then the Wasserstein distance between $X$ and $Z$ is bounded above,
\[ d_W(Z,X) \leq \dfrac{2 \pi^3}{\kappa \sigma^4} + 2 \pi. \]
}
This bound on the Wasserstein metric is discovered after an application of Theorem \ref{thm: bayesian comparison}. The result itself provides a relatively weak comparison between the two distributions, however such a result is still of interest. In contrast to the von-Mises distribution, wrapped normal distribution is typically non-trivial to simulate from, and so comparisons between these two distributions have been a central point of discussion within directional statistics \cite{mardia1975statistics,best1979efficient}. 
 
\textbf{Notation and conventions. }Throughout the paper we shall be using the following notation: $P$ is a probability measure on $\S^1$ with continuous Lebesgue density $p$. $L^1(P)$ denotes the space of absolutely integrable functions on $\S^1$ under $P$. We use this abbreviation for $L^1(\S^1,P)$ unless explicitly stated otherwise. For simplicity, we shall assume that the support of $P$ is a connected subset of $\S^1$. Any reference to standard coordinates of $\S^1$ means that we associate $\S^1$ with the interval $(-\pi,\pi]$ alongside the equivalence relation $-\pi\sim \pi$; $x \sim y$ means that $x-y\; \mathrm{mod}\; 2\pi =0$. We prescribe $\S^1$ with its canonical Riemannian metric $g=dx^2$.

\section{The Circular Stein Operator}\label{sec:op}

This initial section is dedicated to establishing the framework necessary to formulate the Stein equation on $\S^1$. We begin by defining the canonical Stein operator for $\S^1$ and further define its inverse operator. In lieu of a diffusion approach, we shall pursue a modified density approach which draws inspiration from D\"obler \cite{dobler2015stein}.

Before we start, we recall two definitions from analysis that shall aid us: 
\begin{definition}
A function $f$ is absolutely continuous on $[a,b]$ if $f$ has derivative $f'$ almost everywhere, $f' \in L^1((a,b],dx)$ and one can write
\[ f(x) = f(a)+\int_a^xf'(y) dy, \quad a \leq x \leq b. \]
\end{definition}
\begin{definition}
Let $g\in C^\infty((a,b])$ with $g(a)=g(b)=0$ be a given function. We say a function $\phi\in L^1((a,b],dx)$ has weak derivative $\phi'$ if
\[ \int_a^b \phi(x) g'(x) dx = -\int_a^b \phi'(x) g(x) dx.\]
\end{definition}

\subsection{The Canonical Density Operator}

To begin, we start by adapting the general density method for $\S^1$:
\begin{definition}\label{def:stein class}
Let $P$ be a probability measure on $\S^1$ with Lebesgue density $p$ with the assumption that $p' \in L^1(dx)$. Define $\mathcal{I}=\{x \in \S^1 : p(x)> 0\}$. The Stein class $\F(P)$ of $P$ is the collection of functions $f:\S^1\rightarrow \R$ such that 
\begin{enumerate}[i)]
\item $f$ is differentiable everywhere on $\mathcal{I}$,
\item $f' \in L^1(dx)$,
\item $\int_{\mathcal{I}}(fp)' dx=0$.
\end{enumerate}
\end{definition}
Since we assume $p$ is absolutely continuous, it is immediate that for any $f \in \F(P)$ the product $fp$ is absolutely continuous on $\mathcal{I}$ since $f$ is also absolutely continuous by items i) and ii).
Because of the Lebesgue integrability assumption on $p'$, constant functions are always in $\F(P)$, and hence $\F(P)$ is always non-empty.

\begin{definition}\label{def:stein pair}
The Stein operator $\mathcal{T}_p$ of a probability measure $P$ on $\S^1$ is the mapping
\[
\mathcal{T}_p:\F(P)\rightarrow L^1(P)
\]
given by
\begin{equation}\label{eq: stein operator}
\mathcal{T}_p f(x) = \begin{cases} 
\frac{(fp)'}{p}(x)& x \in \mathcal{I}\\ 
f(x) &  x \notin \mathcal{I}.
\end{cases}
\end{equation} 
Furthermore, the double $(\F(P),\mathcal{T}_p)$ is called the Stein pair.
\end{definition}

If one were to compare these definitions with their Euclidean counterparts (for example in \cite{ley2017stein}), one would see immediate differences. Instead of requiring $fp$ to be absolutely continuous, we are able to instead restrict $f' \in L^1(dx)$ so that $fp$ is absolutely continuous. In other words, we need not demand absolute continuity in $fp$ since it is a consequence of Definition \ref{def:stein pair}. Stating that $f'$ is differentiable everywhere allows us to write down, explicitly, the Stein operator as a differential operator in terms of $f$ --- see below. The key difference is that for the definition of the Stein class on $\R$, the third condition $ \int_{\R} (fp)' dx = 0 $ is required (cf. \cite{ley2017stein}) for $p \in L^1(\R,dx)$. In the case of the circle, if $\mathrm{supp}(P)=\S^1$, this condition is automatically satisfied: if we are to identify $\S^1$ with the interval $[-\pi,\pi)$ alongside the equivalence relation $-\pi\sim\pi$, then $f(-\pi)p(-\pi)=f(\pi)p(\pi)$ and so 
\[ \int_{\S^1}(fp)'dx = (fp)\bigg|_{-\pi}^\pi=f(\pi)p(\pi)-f(-\pi)p(-\pi)=0.\]
\begin{lemma}\label{lemma:zero}
Let $P$ be a probability measure on $\S^1$ with Lebesgue density $p$ and Stein class $(\F(P), \mathcal{T}_p)$. For all $f \in \F(P)$, $\E_P[\mathcal{T}_pf] =0$.
\end{lemma}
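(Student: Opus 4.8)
The plan is to split the expectation over $\mathcal{I}$ and its complement and use that $p$ vanishes outside $\mathcal{I}$. Concretely, writing $X \sim P$,
\[
\E_P[\mathcal{T}_p f] = \int_{\S^1} \mathcal{T}_p f(x)\, p(x)\, dx = \int_{\mathcal{I}} \frac{(fp)'}{p}(x)\, p(x)\, dx + \int_{\S^1 \setminus \mathcal{I}} f(x)\, p(x)\, dx.
\]
The second integral is zero because $p \equiv 0$ on $\S^1 \setminus \mathcal{I}$ by the definition of $\mathcal{I}$; this needs the minor observation that $f$ is $P$-integrable so the integrand is well-defined, which follows since $\mathcal{T}_p f \in L^1(P)$ by the codomain of Definition \ref{def:stein pair}. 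On $\mathcal{I}$ the density $p$ is strictly positive, so the factors of $p$ cancel pointwise and the first integral equals $\int_{\mathcal{I}} (fp)'\, dx$, which is precisely $0$ by item iii) of Definition \ref{def:stein class}. Combining the two pieces gives $\E_P[\mathcal{T}_p f] = 0$.

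I would include a sentence clarifying why the cancellation $\frac{(fp)'}{p}\cdot p = (fp)'$ is legitimate as an equality of $L^1(dx)$ functions on $\mathcal{I}$: since $f \in \F(P)$ and $p$ is absolutely continuous, $fp$ is absolutely continuous on $\mathcal{I}$ (as already noted in the text following Definition \ref{def:stein class}), so $(fp)'$ exists almost everywhere and lies in $L^1(dx)$, and the product rule applies pointwise a.e. where $p > 0$. One subtlety worth a remark is that $\mathcal{I}$ may be a proper open subset of $\S^1$ (the support assumption in the notation section only guarantees connectedness, not that $\mathcal{I} = \S^1$); the argument above handles this uniformly because condition iii) is stated as an integral over $\mathcal{I}$, not over all of $\S^1$.

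The main obstacle — really the only point requiring care — is justifying that $\int_{\mathcal{I}} (fp)'\, dx$ is genuinely the quantity controlled by condition iii) and not something that needs an extra boundary-term argument. If $\mathcal{I} = \S^1$ this is the wrap-around cancellation $f(\pi)p(\pi) - f(-\pi)p(-\pi) = 0$ already displayed before the lemma; if $\mathcal{I} \subsetneq \S^1$, condition iii) is imposed directly as a hypothesis on membership in $\F(P)$, so there is nothing further to prove. I do not anticipate any analytic difficulty beyond invoking absolute continuity of $fp$ to make sense of the integrals, so the proof is short.
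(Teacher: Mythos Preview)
Your proof is correct and follows exactly the same approach as the paper: split the expectation over $\mathcal{I}$ and $\mathcal{I}^c$, observe the latter vanishes since $p\equiv 0$ there, and reduce the former to $\int_{\mathcal{I}}(fp)'\,dx=0$ by condition iii). The paper's proof is a one-line version of yours; your added remarks on absolute continuity and the case $\mathcal{I}\subsetneq\S^1$ are sound but not strictly needed.
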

\begin{proof}
This statement is evident from Definition \ref{def:stein class}. For $f \in \F(P)$ and $\mathcal{I}=\{x \in \S^1 : p(x) >0\}$,  
\[\E_P[\mathcal{T}_p f] = \int_{\mathcal{I}} \dfrac{(fp)'}{p} p  dx + \int_{\mathcal{I}^c} fp dx=0.
\]
\end{proof}
\begin{example}
We now give some examples of Stein operators for circular distributions, with $f \in \F(P)$ throughout;
\begin{enumerate}[a)]
\item Uniform measure $U(\S^1)$ with $p(x)=(2\pi)^{-1}$;
\begin{equation}\label{eq:uniform op}
\mathcal{T}_pf(x) = f'(x).
\end{equation}
In this particular instance, the Stein class is
\[ \F(P) = \{f \in C^0(\S^1): f' \in L^1(dx)\}.  \]
\item von-Mises distribution $VM(\mu,\kappa)$ with $p(x) = (2\pi I_0(\kappa))^{-1}\Exp(\kappa\cos(x-\mu))$, where $I_n(x)$ is the modified Bessel function of the first kind defined as $ I_n(\kappa):=\frac{1}{\pi}\int_0^\pi e^{\kappa \cos \theta} \cos(n\theta) d\theta; $
\[ \mathcal{T}_p f(x) = f'(x) - \kappa\sin(x-\mu) f(x). \]
\item Bingham distribution $Bing(\mu,\kappa)$ with $p(x)=(2\pi I_0(\kappa/2) \Exp(\kappa/2))^{-1} \Exp(\kappa \cos^2(x-\mu))$;
\[ \mathcal{T}_pf(x) = f'(x) - \kappa\sin(2(x-\mu))f(x). \]
\item Cardioid distribution $C(\mu,\rho)$ with $p(x) = (2\pi)^{-1}(1+2\rho \cos(x-\mu))$ and $|\rho|\leq \frac{1}{2}$;
\[ \mathcal{T}_pf(x) = f'(x) - \dfrac{2\rho \sin(x-\mu)}{2\rho \cos(x-\mu) +1}f(x). \]
\end{enumerate}
\end{example}

\begin{definition}\label{def:mean angle}
Let $X$ be a circular random variable. The mean angle $\mu$ is defined as $\mu=\mathrm{Arg}(\E[e^{iX}])$ for $i^2=-1$ and $\mathrm{Arg}$ is the complex argument function.
\end{definition}
This quantity's origin is from the first circular moment $\E[e^{iX}]$ which is decomposed into $\phi_1=\rho e^{i\mu}$ in the standard coordinate system of $\S^1$; $\rho$ is the mean resultant length, and $\mu$ is also known as the mean direction \cite{mardia2009directional}. Before calculating $\mu$, it is paramount to determine what coordinate system one is working with. The first moment, $\phi_1$, is not necessarily always invariant to choice of coordinate system and $\mathrm{Arg}$ is only defined to have support in the standard coordinates of $\S^1$. One will have to convert $\phi_1$ to be in standard coordinates before proceeding to calculate $\mu$. Under the standard coordinates, one particular property $\mu$ has is that $\E[\sin(X-\mu)]=0$. This fact will be important in the following section.

We shall be using this parameter as a foundation from which we shall construct a coordinate system on $\S^1$ for the purpose of integration. This procedure is as follows: For any $x \in \S^1$ which is not $\mu+\pi$, the antipodal point to $\mu$, there is a unique tangent vector $V_x \in T_\mu \S^1 \cong \R$ with $\sqrt{g(V_x,V_x)}<\pi$ such that $\Exp_\mu(V_x)=x$. From which, the map $x \mapsto V_x$ determines a local coordinate system covering $\S^1\setminus \{\mu+\pi\}$. Furthermore, the mapping identifies $\S^1\setminus \{\mu+\pi\}$ with $(-\pi,\pi)\subset \R.$ Under this new coordinate system, $\mu$ is identified with the origin of $\R$ and $x-\mu$ is simply $V_x$. Then, by mapping $\mu+\pi$ to $\pi$, we in effect identify $\S^1$ with $(-\pi,\pi]\subset \R$ with the understanding that the two endpoints are wrapped together; $-\pi$ is identified with $\pi$. In the case where $\mu$ is not unique, for example with the uniform measure on $\S^1$, we take (any) one of the valid values for $\mu$ and form the corresponding identification as described above. Hence, our chosen coordinate system of $\S^1$ is dependent upon $P$. In the sequel, any reference to the \textit{$\mu$ coordinate system} will directly refer to this construction.

\subsection{The Inverse Operator}

The next objective is to define the inverse of the Stein operator \eqref{eq: stein operator} from which we can define the Stein kernel.  Under the \textit{$\mu$ coordinate system}, since we have identified $\mu$ with $0$, $\E_P[\sin(X)]=0$ for a random variable $X$ on $\S^1$. Moreover, $p$ is now centred at $\mu$, meaning that $\mathrm{Arg}(\E[e^{i(X-\mu)}])=0$. For example, if $P$ is the von-Mises, $P \sim VM(\mu,\kappa)$ in standard coordinates, $P$ in the \textit{$\mu$ coordinate system} changes to $P \sim VM(0,\kappa)$.

\begin{definition}\label{def:inverse}
Let $\F^0(P) = \{h \in L^1(P) : \E_P[h]=0\}$ and define the operator $ \mathcal{T}^{-1}_p:\F^0(P)\rightarrow \F(P) $ by
\begin{equation}\label{eq:inverse}
\mathcal{T}^{-1}_p h(x) :=
\begin{cases} \dfrac{1}{p(x)}\int_{-\pi}^x h(y) p(y) dy + \dfrac{h(-\pi)p(-\pi)}{p(x)} & \mathrm{if }\; p(x) \neq 0,\\
h(x) & \mathrm{if }\; p(x) = 0,
\end{cases}
\end{equation}
in which the parameters of $p$ are in terms of the \textit{$\mu$ coordinate system}.
\end{definition}
\begin{proposition}\label{prop:inverse}
The operator $\mathcal{T}_p^{-1}$ is the inverse of $\mathcal{T}_p$.
\end{proposition}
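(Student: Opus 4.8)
The plan is to verify the two composition identities $\mathcal{T}_p\circ\mathcal{T}_p^{-1}=\mathrm{id}$ on $\F^0(P)$ and $\mathcal{T}_p^{-1}\circ\mathcal{T}_p=\mathrm{id}$ on $\F(P)$, together with the mapping properties that make these composites meaningful, namely $\mathcal{T}_p^{-1}(\F^0(P))\subseteq\F(P)$ and $\mathcal{T}_p(\F(P))\subseteq\F^0(P)$. The latter is immediate: $\mathcal{T}_p f\in L^1(P)$ by construction of the operator, and $\E_P[\mathcal{T}_p f]=0$ by Lemma \ref{lemma:zero}. Every computation below splits into the branch $x\in\mathcal{I}:=\{p>0\}$, where all the content lies, and the branch $x\in\mathcal{I}^{c}$, on which both operators act as the identity and both identities are trivial.

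For $\mathcal{T}_p\circ\mathcal{T}_p^{-1}$, fix $h\in\F^0(P)$ and set $g:=\mathcal{T}_p^{-1}h$, so that on $\mathcal{I}$ we have $g(x)p(x)=\int_{-\pi}^x h(y)p(y)\,dy+h(-\pi)p(-\pi)$. Since $h\in L^1(P)$ forces $hp\in L^1(dx)$, the right-hand side is absolutely continuous in $x$, hence so is $gp$, and differentiating gives $(gp)'=hp$ almost everywhere on $\mathcal{I}$; dividing by $p$ yields $\mathcal{T}_p g=h$. The same relation $(gp)'=hp$ also gives $g\in\F(P)$: it forces $\int_{\mathcal{I}}(gp)'\,dx=\int_{\mathcal{I}}hp\,dx=\E_P[h]=0$, and, since $p$ is continuous and bounded away from $0$ on the compact set $\overline{\mathcal{I}}$ with $p'\in L^1(dx)$, it shows $g$ is differentiable with $g'=h-gp'/p\in L^1(dx)$.

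For $\mathcal{T}_p^{-1}\circ\mathcal{T}_p$, fix $f\in\F(P)$ and put $h:=\mathcal{T}_p f$, so that $hp=(fp)'$ on $\mathcal{I}$. The fundamental theorem of calculus then gives $\int_{-\pi}^x h(y)p(y)\,dy=(fp)(x)-(fp)(-\pi)$, and substituting into \eqref{eq:inverse} returns $f(x)$ plus a residual term proportional to $1/p(x)$ built from the endpoint quantities $(fp)(-\pi)$ and $h(-\pi)p(-\pi)$. The crux of the proposition is the handling of this endpoint term, and this is the step I expect to be the main obstacle: one must check that the constant $h(-\pi)p(-\pi)$ appearing in \eqref{eq:inverse} is exactly the one needed both to make $\mathcal{T}_p^{-1}h$ a single-valued absolutely continuous function on $\S^1$ rather than merely on the interval $(-\pi,\pi]$ and to reconcile the boundary term $(fp)(-\pi)$ produced by the fundamental theorem of calculus, with due care that the pointwise evaluations at $-\pi$ are meaningful for the functions at hand. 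This is precisely where the circular geometry enters — through the wrapping $-\pi\sim\pi$ built into the $\mu$ coordinate system and the boundary relation $f(\pi)p(\pi)=f(-\pi)p(-\pi)$ (automatic for functions on $\S^1$, and the content of the third defining condition of the Stein class when $\mathrm{supp}(P)=\S^1$) — and it is the one part of the argument with no Euclidean counterpart; the differentiations, $L^1$ memberships and $\mathcal{I}^{c}$ branches are routine.
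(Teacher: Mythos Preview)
Your plan coincides with the paper's: verify the two composites by direct computation on $\mathcal{I}$ and note that both maps act as the identity on $\mathcal{I}^c$. You go a step further than the paper by explicitly checking the mapping property $\mathcal{T}_p^{-1}h\in\F(P)$; the paper does not address this.

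You correctly isolate the boundary constant as the crux, and your hesitation there is warranted. In the paper's computation of $\mathcal{T}_p^{-1}(\mathcal{T}_p h)$ (their $h\in\F(P)$ is your $f$), the additive constant is written from the outset as $h(-\pi)p(-\pi)/p(x)$, and it then cancels immediately against the $-(hp)(-\pi)$ produced by $\int_{-\pi}^x (hp)'$. Read literally, however, Definition~\ref{def:inverse} prescribes that constant to be $(\mathcal{T}_p h)(-\pi)\,p(-\pi)/p(x)=(hp)'(-\pi)/p(x)$, and $(hp)'(-\pi)$ need not equal $(hp)(-\pi)$ for general $h\in\F(P)$ --- indeed $\mathcal{T}_p$ annihilates every constant multiple of $1/p$, so $\mathcal{T}_p^{-1}\circ\mathcal{T}_p$ cannot be the identity on the whole of $\F(P)$ as written. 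Thus the obstacle you flag is genuine: your proposal leaves it open, and the paper's argument sidesteps rather than resolves it by silently replacing $(\mathcal{T}_p h)(-\pi)p(-\pi)$ with $h(-\pi)p(-\pi)$. The remaining steps you outline (the $\mathcal{T}_p\circ\mathcal{T}_p^{-1}$ direction and the $\mathcal{I}^c$ branch) are routine and match the paper line for line.
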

\begin{proof}
There are two sections to this proof: the first being the case where $p(x)\neq 0$ and the second being the case where $p(x)=0$. We begin with the first case.
First, let us check that for a function $h \in \F^0(P)$, $\mathcal{T}_p\mathcal{T}^{-1}_p h = h$:
\begin{align*}
\mathcal{T}(\mathcal{T}^{-1}_p h)(x)&= \dfrac{((\mathcal{T}^{-1}_ph(x)) p(x))'}{p(x)},\\
&= \dfrac{1}{p(x)}\dfrac{\partial}{\partial x} \bigg(\int_{-\pi}^x h(y) p(y) dy + h(-\pi)p(-\pi)\bigg),\\
&=\dfrac{1}{p(x)} h(x)p(x),\\
&=h(x).
\end{align*}
Now to show the other way, let $h \in \F(P)$. Since $\E_P[\mathcal{T}_p h]=0$ by Lemma \ref{lemma:zero}, it is clear that $\mathcal{T}_p h \in \F^0(P)$. Then,
\begin{align*}
\mathcal{T}_p^{-1} (\mathcal{T}_p h)(x) &=\dfrac{1}{p(x)}\int_{-\pi}^x \mathcal{T}_p h(y) p(y)dy + \dfrac{h(-\pi)p(\mu-\pi)}{p(x)},\\
&=\dfrac{1}{p(x)}\int_{-\pi}^x \dfrac{((h(y)p(y))'}{p(y)}p(y) dy+\dfrac{h(-\pi)p(-\pi)}{p(x)},\\
&=\dfrac{1}{p(x)}\int_{-\pi}^x (h(y)p(y))' dy + \dfrac{h(-\pi)p(-\pi)}{p(x)},\\
&=\dfrac{1}{p(x)}\big(h(x)p(x) - h(-\pi)p(-\pi) + h(-\pi)p(-\pi)\big),\\
&=h(x).
\end{align*}
For the case where $p(x)=0$, let $h \in \F^0(P)$. Then
\[
\mathcal{T}_p (\mathcal{T}_p^{-1} h)(x) = \mathcal{T}_p h(x) = h(x).
\]
For the other way, since $p(x)=0$ it is clear that $\E_P[\mathbb{I}_{\mathcal{I}^c} \mathcal{T}_p h]=0$ and hence $\mathcal{T}_p h \in \F^0(P)$ on $\mathcal{I}^c$, therefore
\[ \mathcal{T}_p^{-1}(\mathcal{T}_p h)(x) = \mathcal{T}_p^{-1}h(x)=  h(x). \]
\end{proof}
A special quantity is obtained when we select $h=\nu-\mathrm{Id}$ in (\ref{eq:inverse}), where $\nu=\int_{-\pi}^\pi x p(x) dx$. Applying the inverse operator to this particular $h$ we generate the classical Stein kernel: when $p(x)\neq 0$
\begin{align*}\label{eq: classic kernel}
\tau(x) :&= \mathcal{T}_p^{-1}(\nu-\mathrm{Id})(x)\\  &= \dfrac{1}{p(x)}\int_{-\pi}^x (\nu-y)p(y) dy+ \dfrac{(\nu+\pi) p(-\pi)}{p(x)}\numberthis,\\
&=-\dfrac{1}{p(x)}\int_x^{\pi} (\nu-y)p(y)+ \dfrac{(\nu+\pi) p(\pi)}{p(x)}. 
\end{align*}
Again, we must define $\tau(x)=\nu-x$ when $p(x)=0$. However, this follows from Definition \ref{def:inverse}.

The role of the constant $h(-\pi)p(-\pi)$ is to ensure that $\mathcal{T}_p^{-1}$ is injective. If one does not include it, then we have that $\mathcal{T}_p^{-1} (\mathcal{T}_p h)(-\pi) = 0$ and $\mathcal{T}_p^{-1} (\mathcal{T}_p h)(\pi)=0$ which is not necessarily the case for general $h$. However, owing to the definition of the Stein operator we also have the following:
\begin{corollary}\label{corollary:inverse}
Fix any $C \in \R$. Define $g(x) = \mathcal{T}_p^{-1} h(x) + C/p(x) $ for $x \in \mathcal{I} $  and $g(x)=h(x)$ for $x \notin \mathcal{I}$. Then, 
\[ \mathcal{T}_p g(x) = \mathcal{T}_p (\mathcal{T}_p^{-1} h)(x). \]
\end{corollary}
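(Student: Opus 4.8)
The plan is to observe that the claim is essentially an immediate consequence of the linearity of $\mathcal{T}_p$ together with the fact that the Stein operator annihilates the specific function $x \mapsto C/p(x)$ on $\mathcal{I}$. First I would restrict attention to the two cases in the definition of $\mathcal{T}_p$ separately, just as in the proof of Proposition \ref{prop:inverse}.

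On the set $\mathcal{I}$ where $p(x) \neq 0$, the operator acts as $\mathcal{T}_p f = (fp)'/p$. Since $g(x) = \mathcal{T}_p^{-1}h(x) + C/p(x)$, I would write
\[
\mathcal{T}_p g(x) = \frac{\big(g(x)p(x)\big)'}{p(x)} = \frac{\big((\mathcal{T}_p^{-1}h(x))p(x) + C\big)'}{p(x)} = \frac{\big((\mathcal{T}_p^{-1}h(x))p(x)\big)'}{p(x)} + \frac{(C)'}{p(x)},
\]
and the last term vanishes because $C$ is constant, leaving exactly $\mathcal{T}_p(\mathcal{T}_p^{-1}h)(x)$. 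One should note here that $(\mathcal{T}_p^{-1}h)p = \int_{-\pi}^x h(y)p(y)\,dy + h(-\pi)p(-\pi)$ is absolutely continuous (it is an indefinite integral plus a constant), so adding the constant $C$ keeps it absolutely continuous and the differentiation step is legitimate; this also confirms $g \in \F(P)$. On the complement $\mathcal{I}^c$ where $p(x) = 0$, both $g$ and $\mathcal{T}_p^{-1}h$ are defined to equal $h(x)$, and $\mathcal{T}_p$ acts there by $f \mapsto f$, so $\mathcal{T}_p g(x) = h(x) = \mathcal{T}_p(\mathcal{T}_p^{-1}h)(x)$ trivially.

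There is essentially no obstacle here — the only point requiring a moment's care is the bookkeeping around absolute continuity and membership in $\F(P)$ so that the expression $\mathcal{T}_p g$ is well-defined in the first place, but this is already handled by the observations following Definition \ref{def:stein class} and the computation in Proposition \ref{prop:inverse}. The content of the corollary is really the remark that precedes it: the constant $h(-\pi)p(-\pi)$ in the definition of $\mathcal{T}_p^{-1}$ is one particular choice among a one-parameter family $C \in \R$ of functions all mapping to the same thing under $\mathcal{T}_p$, and it is singled out only to pin down injectivity of $\mathcal{T}_p^{-1}$.
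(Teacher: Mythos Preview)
Your proposal is correct. The paper states this corollary without proof, treating it as an immediate consequence of the definition of $\mathcal{T}_p$; your argument---computing $(gp)' = \big((\mathcal{T}_p^{-1}h)p + C\big)' = \big((\mathcal{T}_p^{-1}h)p\big)'$ on $\mathcal{I}$ and noting the trivial agreement on $\mathcal{I}^c$---is exactly the intended one-line justification.
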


\begin{example}\label{exmp:uniform kernel}
Let $X\sim U(\S^1),$ the uniform measure on $\S^1$. Then $\nu=\int_{-\pi}^\pi x /2\pi dx = 0$ and choose $\mu=0$. The Stein kernel of this distribution is 
\begin{align*}\label{eq:uniform kernel}
\tau(x) &= 2\pi \int_{-\pi}^x - \dfrac{y}{2\pi}dy+\pi,\\
&= \dfrac{\pi^2 - x^2}{2} + \pi. \numberthis
\end{align*}
\end{example}

One of the main uses of the Stein kernel is to be able to construct bounds on the Wasserstein distance between distributions on $\R$ (cf. Theorem 3.1 in \cite{ley2017distances}). If we wish to adapt this theorem onto $\S^1$ for a circular distribution --- say a von-Mises distribution --- we will have to compute the Stein kernel. So far we have only looked at examples with the uniform measure on $\S^1$ due to its simple Lebesgue density.
However, for the von-Mises distribution in particular, one will quickly find that obtaining a closed form solution of the kernel is impossible. 
For simplicity, let $X\sim VM(0,\kappa)$; then
\[ \tau(x) = \mathcal{T}_p(\E[X]-\mathrm{Id})(x) = e^{-\kappa\cos(x)}\int_{-\pi}^x (\mu-y)e^{\kappa \cos(y)} dy + \dfrac{C}{p(x)}. \]
There are two problems with this: The first is that the integral is intractable. We can obtain bounds on $\tau(x)$, but these bounds do not particularly aid in bounding the Wasserstein metric as they will be  large --- this is akin to bounding the solution to the Stein equation which is what we wanted to avoid. The second is the definition of $\mu$. In Example \ref{exmp:uniform kernel} we used $\mu=\int_{-\pi}^{\pi}xp(x) dx$, but for directional data analysis this is not used as a parameter of location since the standard mean is not well defined on $\S^1$.

A different approach is to redefine $\mu$ to be the extrinsic mean of $\S^1$: $\E[e^{iX}]$. This does, however, require us to completely redefine the kernel to ensure that $\tau(\pi)=0$. This is precisely the route that we shall take in the next section except we shall not use the full extrinsic mean, rather the mean angle as defined in Definition \ref{def:mean angle}. We shall see that this particular choice of parameter coincides well with the von-Mises and other common directional distributions.

\section{The Circular Stein Kernel}\label{sec:kernel}

The end of the previous section lead us to motivate the need to redefine the Stein kernel for certain circular distributions. We dedicate this section to constructing this new kernel as well as compute it for a handful of distributions. Similarly to the classical Stein kernel, we shall utilise the inverse Stein operator to initially define it.

\begin{definition}
Let $X$ be a circular random variable with distribution $P$, mean angle $\mu = \mathrm{Arg}(\E[e^{iX}])$ and $\mu$-centred Lebesgue density $p$; then $\sin(x) \in \F^0(P)$ on the \textit{$\mu$ coordinate system}. The circular Stein kernel $\tau^c$ of $P$ is defined as
\begin{align*}
 \tau^c(x) &:=\mathcal{T}^{-1}\sin(-\mathrm{Id})(x),\\
 &=-\dfrac{1}{p(x)}\int_{-\pi}^x \sin(y)p(y) dy,\\
 &=\dfrac{1}{p(x)}\int_x^{\pi} \sin(y)p(y) dy.
\end{align*}
\end{definition}
By a $\mu$-centred density of a circular random variable $X\sim P$ with density $p(x;\mu)$, we mean $p(x;0)$. In Example \ref{exmp:vm kernel}, we shall see that this $\mu$-centred density is precisely the density of the random variable $X-\mu\; \mathrm{mod} 2\pi$.

We are distinguishing the circular Stein kernel from the classical Stein kernel (\ref{eq: classic kernel}) with superscript $c$.

\begin{example}\label{exmp:vm kernel}
Let $X\sim VM(\mu,\kappa)$ with Lebesgue density $p(x)=(2\pi I_0(\kappa))^{-1} \Exp(\kappa\cos(x-\mu))$ with $\mu \in \S^1$ and $\kappa>0$.
To calculate the mean angle, we first introduce the special function $I_1(\kappa) = \frac{1}{2\pi}\int_0^{2\pi} \cos(x) e^{\kappa \cos(x)} dx$. It turns out that dividing the moment by $e^{i\mu}$ aids in its calculation:
\begin{align*}
\E[e^{i(X-\mu)}] &= \frac{1}{2\pi I_0(\kappa)} \int_{\S^1} e^{i(x-\mu)}e^{\kappa\cos(x-\mu)}dx,\\
&=\dfrac{1}{I_0(\kappa)}\bigg(\dfrac{1}{2\pi}\int_{\S^1} \cos(x-\mu) e^{\kappa \cos(x-\mu)} dx,\\
 &\quad\quad\quad+ \dfrac{i}{2\pi} \int_{\S^1} \sin(x-\mu) e^{\kappa \cos(x-\mu)} dx\bigg),\\
&=\frac{I_1(\kappa)}{I_0(\kappa)},
\end{align*}
since $\sin(x)e^{\kappa\cos(x)}$ is anti-symmetric about the origin. Therefore, because $\mathrm{Arg}(\E[e^{i(X-\mu)}])=0$, it must be that $\mu=\mathrm{Arg}(\E[e^{iX}])$.
 Then we calculate the circular Stein kernel by switching to \textit{$\mu$ coordinates},
\begin{align*}
\tau^c(x) &= \Exp(-\kappa \cos(x))\int_{-\pi}^x -\sin(y) \Exp(\kappa\cos(y)) dy,\\
&= \dfrac{1}{\kappa}-\dfrac{1}{\kappa}\Exp\big(\kappa(-1-\cos(x))\big).
\end{align*}
Notably, we have the bounds
\begin{equation}\label{ineq: kappa}
0 \leq \tau^c(x) \leq \dfrac{1}{\kappa}(1-e^{-2\kappa})\leq \dfrac{1}{\kappa}
\end{equation}
which achieves minima at $x=\pm \pi$ and a maximum at $x=0$.  This particular bound on $\tau^c$ for the von-Mises distribution will be of use to us later on.
\end{example}

\begin{example}\label{exmp: bing}
Let $X$ be a one-dimensional Bingham random variable, with Lebesgue density 
\[ p(x) = \dfrac{1}{2\pi e^{\kappa/2}I_0(\frac{\kappa}{2})} \Exp\big(\kappa \cos^2(x-\mu)\big),\quad x \in \S^1. \]
One can deduce that the mean angle is $\mu$ due to the fact that $p$ is symmetric about $\mu$, and so in standard coordinates $\E[\sin(X-\mu)]=0$.
In order to calculate the circular Stein kernel of this random variable, we must first compute the integral
\begin{align*}
-\int_{-\pi}^{x} \sin (y) \Exp \big(\kappa \cos^2 (y)\big)du &= \int_{-\sqrt{\kappa}}^{\sqrt{\kappa}\cos (x)}\dfrac{e^{z^2}}{\sqrt{\kappa}} dz,\\
&=\dfrac{\sqrt{\pi}}{2\sqrt{\kappa}}\big(\mathrm{erfi}(\sqrt{\kappa}\cos (x)) + \mathrm{erfi}(\sqrt{\kappa})\big).
\end{align*}
Here, $\mathrm{erfi}(x)$ is the imaginary error function and relates to the error function $\mathrm{erf}(x) = i \mathrm{erfi}(ix)$.
Whence, 
\[ \tau^c(x) = \dfrac{\sqrt{\pi}}{2}\dfrac{e^{-\kappa \cos^2 (x)}}{\sqrt{\kappa}}\big(\mathrm{erfi}(\sqrt{\kappa}\cos (x)) + \mathrm{erfi}(\sqrt{\kappa})\big). \]
\end{example}

\begin{example}
Let $X \sim U(\S^1)$ the uniform measure on $\S^1$ which has Lebesgue density $p(x) = \frac{1}{2\pi}$, $x \in \S^1$, and choose: $\mu=0$ from Example \ref{exmp:uniform kernel}.
\[ \tau^c(x) = 2\pi \int_{-\pi}^x -\dfrac{\sin(y)}{2\pi} dy = \cos(x)+1. \]
One can also obtain this kernel by taking the limit as $\kappa \rightarrow 0$ in Example \ref{exmp:vm kernel} for $\mu=0$;
\begin{align*}
\lim_{\kappa \rightarrow 0} \dfrac{1-e^{\kappa(-1-\cos x)}}{\kappa}&= \lim_{\kappa \rightarrow 0} (1+\cos x) e^{\kappa(-1-\cos x)} ,\\
&=1+\cos x.
\end{align*}
\end{example}

Similar to the classical Stein kernel, the circular Stein kernel also satisfies the following integration by parts property.
\begin{lemma}\label{lem:int by parts kernel}
Define $X$ to be a random variable on $\S^1$ with corresponding circular Stein kernel $\tau^c$ and mean angle $\mu$, and let $\phi$ be absolutely continuous with weak derivative $\phi$. We have that
\[ \E[\sin(X)\phi(X)]=\E[\tau^c(X)\phi'(X)].\]
\end{lemma}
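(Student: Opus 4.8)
The plan is to exploit the defining integral representation of the circular Stein kernel, which in the $\mu$-coordinate system reads $\tau^c(x)\,p(x) = -\int_{-\pi}^{x}\sin(y)p(y)\,dy = \int_{x}^{\pi}\sin(y)p(y)\,dy$, and to combine it with a single integration by parts on the interval $[-\pi,\pi]$ with endpoints identified. Concretely, I would introduce $\psi(x):=\phi(x)\,\tau^c(x)\,p(x)$ and first observe that $\psi$ is absolutely continuous on $[-\pi,\pi]$: the factor $\tau^c p$ is an indefinite integral of $\sin\cdot p\in L^1(dx)$, hence absolutely continuous with $(\tau^c p)'(x) = -\sin(x)p(x)$ for almost every $x$; and $\phi$ is absolutely continuous by hypothesis; so the product is absolutely continuous too. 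Since the weak derivative of an absolutely continuous function agrees with its a.e.\ pointwise derivative, the Leibniz rule yields $\psi'(x) = \phi'(x)\tau^c(x)p(x) - \phi(x)\sin(x)p(x)$ for a.e.\ $x$.

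Next I would apply the fundamental theorem of calculus to $\psi$, namely $\int_{-\pi}^{\pi}\psi'(x)\,dx = \psi(\pi)-\psi(-\pi)$, and show that both boundary terms vanish. Indeed $\tau^c(\pi)p(\pi) = \int_{\pi}^{\pi}\sin(y)p(y)\,dy = 0$, while $\tau^c(-\pi)p(-\pi) = \int_{-\pi}^{\pi}\sin(y)p(y)\,dy = \E[\sin(X)]$, and this last quantity is $0$ precisely because we are working in the $\mu$-coordinate system, where $\mu = \mathrm{Arg}(\E[e^{iX}])$ is identified with the origin so that $\E[\sin(X)]=0$. Integrating the Leibniz expansion over $[-\pi,\pi]$ therefore gives $0 = \E[\tau^c(X)\phi'(X)] - \E[\sin(X)\phi(X)]$, which is exactly the asserted identity. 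Equivalently, Fubini's theorem applied to $\int_{-\pi}^{\pi}\phi'(x)\,[\int_{x}^{\pi}\sin(y)p(y)\,dy]\,dx$ — legitimate since $\iint|\phi'(x)\sin(y)p(y)|\,dx\,dy \le \|\phi'\|_{L^1}<\infty$ — produces the same conclusion once the inner integral $\int_{-\pi}^{y}\phi'(x)\,dx = \phi(y)-\phi(-\pi)$ is evaluated.

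The proof is essentially bookkeeping, so there is no single hard step, but three points must be handled with care. First, one must use the representation $\tau^c p = \int_{\cdot}^{\pi}\sin\cdot p$ on all of $\S^1$, not merely on $\mathcal{I}=\{p>0\}$; this is where the standing convention that $\mathrm{supp}(P)$ is connected is used, since it guarantees $-\int_{-\pi}^{x}\sin(y)p(y)\,dy$ still vanishes wherever $p$ does. Second, one must invoke the versions of the fundamental theorem of calculus and the product rule valid for absolutely continuous (rather than $C^1$) functions, which is exactly the role played by the weak-derivative hypothesis on $\phi$. Third, the vanishing of the boundary term at $-\pi$ is nothing but the defining property $\E[\sin(X)]=0$ of the mean angle, so this is the place where the hypothesis "$\mu$ is the mean angle of $X$" enters; dropping it would leave the residual term $-\phi(-\pi)\E[\sin(X)]$.
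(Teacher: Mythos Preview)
Your proposal is correct and follows essentially the same route as the paper: write $\E[\tau^c(X)\phi'(X)]$ via the integral representation $\tau^c(x)p(x)=-\int_{-\pi}^{x}\sin(y)p(y)\,dy$ and perform a single integration by parts, with the boundary contributions vanishing because the inner integral is zero at $x=-\pi$ and equals $\E[\sin(X)]=0$ at $x=\pi$ in the $\mu$-coordinate system. Your write-up is in fact more careful than the paper's about the regularity needed (absolute continuity of $\tau^c p$ and of $\phi$, and the resulting a.e.\ product rule), but the underlying computation is identical.
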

\begin{proof}
Let $X$ have Lebesgue density $p$ on $\S^1$, then
\[
\E[\tau^c(X)\phi'(X)]=-\int_{\S^1}\int_{-\pi}^x \sin(y)p(y)dy \phi'(x)dx.
\]
Using integration by parts with $u=\int_{-\pi}^x \sin(y)p(y)dy$ and $v'=\phi'(x)$ we obtain
\begin{align*}
\E[\tau^c(X)\phi'(X)] &= -\phi(x)\int_{-\pi}^x \sin(y)p(y) dy\bigg|_{-\pi}^{\pi}+\int_{\S^1}\sin(x)\phi(x)p(x) dx\\
&=\E[\sin(X)\phi(X)].
\end{align*}
In the second equality we have used the continuity of $\phi$ and the fact that $\E_P[\sin(X)]=0$ in the \textit{$\mu$ coordinate system}.
\end{proof}

\section{Bounding of the Wasserstein Distance}\label{sec:wass}

Let ${W}=\{h : \norm{h} \leq 1 \}$ be the set of Lipschitz continuous functions with a Lipschitz constant of 1. The Wasserstein distance between two probability measures $P_1$ and $P_2$ on measurable space $(\Omega,\F)$ is defined as 
\[ d_{W}(P_1,P_2) = \sup_{h \in W}\bigg|\int_\Omega h dP_1 - \int_\Omega h dP_2\bigg|. \]
Using the Stein operator, we may also construct the renowned Stein equation for $X\sim p$;
\begin{equation}\label{eq:stein eq s1}
\mathcal{T}_p f_h(x) = h(x)-\E[h(X)]
\end{equation}
with $\mathcal{T}_p$ defined in Definition \ref{def:stein pair}.
Clearly, $\E[\mathcal{T}_pf_h(X)]=0$ since $\E[h(X)-h(X)]=0.$ More concretely, we can say that $h-\E[h(X)]\in \F^0(P)$. It is now evident that we can apply the inverse Stein operator to both sides of the Stein equation (\ref{eq:stein eq s1}) in order to find its solution. However, by Corollary \ref{corollary:inverse} we may choose $C = -p(-\pi)(h(-\pi)-\E[h(X)])$ so that we can define the solution
\begin{equation}\label{eq:solution}
f_h(x):= \dfrac{1}{p(x)}\int_{-\pi}^x (h(y)-\E[h(X)])p(y) dy.
\end{equation}

\subsection{Main Theorem}

We next turn our attention to the use of the Stein kernel to bound the Wasserstein distance for distributions on $\S^1$. We shall take a similar approach to that of \cite{ley2017distances} with modifications of the kernel that are discussed in \cite{dobler2015stein} since this does not involve bounding the solution to the Stein equation directly. 
\begin{lemma}\label{lemma: 3.13}
Let $\tau^c$ be the circular Stein kernel of a circular random variable $X$ with Lebesgue density $p$ and mean angle $\mu$. Define the solution to the Stein equation $f_h$ by (\ref{eq:solution}) and further define $g_h=f_h/\tau^c$.
Then, we have for any Lipschitz continuous test function $h:\S^1 \rightarrow \R$
\[  |g_h(x)| \leq \norm{h'} \dfrac{\int_{\mu-\pi}^x (\E[X] - y)p(y) dy}{ \Big|\int_{\mu-\pi}^x \sin(\mu-y) p(y) dy \Big|}.  \]
\end{lemma}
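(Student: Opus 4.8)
The plan is to express $g_h = f_h/\tau^c$ directly in terms of the two integrals appearing in the statement, and then bound the numerator using the Lipschitz property of $h$. First I would write out, using the definition \eqref{eq:solution} of $f_h$ and the definition of the circular Stein kernel $\tau^c$ (in the $\mu$ coordinate system, where $\mu$ is identified with $0$, so $\mu - \pi$ becomes $-\pi$), the quotient
\[
g_h(x) = \frac{f_h(x)}{\tau^c(x)} = \frac{\frac{1}{p(x)}\int_{-\pi}^x (h(y) - \E[h(X)]) p(y)\, dy}{\frac{1}{p(x)}\int_x^{\pi}\sin(y) p(y)\, dy}.
\]
The factors of $1/p(x)$ cancel, so $g_h(x)$ is a ratio of two integrals; I would then also use that $\int_{-\pi}^\pi \sin(y)p(y)\,dy = 0$ (the $\mu$-centering, i.e. $\E_P[\sin X] = 0$) to rewrite the denominator as $-\int_{-\pi}^x \sin(y) p(y)\, dy$, matching the $\int_{\mu-\pi}^x \sin(\mu - y) p(y)\, dy$ of the statement up to the sign convention, which is why the absolute value appears there.

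The key step is handling the numerator. Since $h - \E[h(X)]$ has $P$-mean zero, I can subtract any constant times $p$; more usefully, the numerator $\int_{-\pi}^x (h(y) - \E[h(X)]) p(y)\, dy$ can be re-expressed via an integration-by-parts / Fubini manoeuvre in the spirit of the classical Stein-kernel identity. Concretely, writing $h(y) - \E[h(X)] = \int (\,\mathbbm{1}\{\cdot\} \text{-type kernel}\,) h'$, or more directly using that for mean-zero integrands $\int_{-\pi}^x (h(y) - \E[h(X)])p(y)\,dy = \int_{-\pi}^x (\E[X] - y) \cdot (\text{something}) $ — the cleanest route is to recall from the development around \eqref{eq: classic kernel} that $\tau(x) p(x) = \int_{-\pi}^x(\nu - y)p(y)\,dy$ with $\nu = \E[X]$, and that a covariance identity gives $\int_{-\pi}^x(h(y) - \E[h(X)])p(y)\,dy = -\,\E\!\big[\tau(X) h'(X)\,\mathbbm{1}\{X > x\}\big]$ or the analogous truncated form. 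From there the bound $|h'| \le \norm{h'}$ pulls $\norm{h'}$ out and leaves exactly $\int_{-\pi}^x(\E[X] - y)p(y)\,dy$ in the numerator (noting this truncated integral is nonnegative, or taking its absolute value), giving
\[
|g_h(x)| \le \norm{h'}\,\frac{\int_{-\pi}^x (\E[X] - y) p(y)\, dy}{\Big|\int_{-\pi}^x \sin(y) p(y)\, dy\Big|},
\]
which is the claim after translating back to the $\mu$ coordinate notation.

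The main obstacle I anticipate is the numerator bookkeeping: getting the truncated integration-by-parts identity right so that $\int_{-\pi}^x (h(y) - \E[h(X)])p(y)\,dy$ is controlled by $\norm{h'}$ times precisely $\int_{-\pi}^x(\E[X]-y)p(y)\,dy$ and not some coarser quantity, and checking the sign so that the result is genuinely an upper bound on $|g_h|$ with the stated absolute values in place. One subtlety worth stating carefully is that $\int_{-\pi}^x(\E[X]-y)p(y)\,dy$ need not be signed on all of $\S^1$ in general, but the argument only needs $|h(y) - \E[h(X)]| \le \norm{h'}\,|{\cdot}|$ applied pointwise under the integral, after which the triangle inequality and the definition of $\tau$ close the estimate; I would present that step slowly. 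Everything else — cancelling $1/p(x)$, invoking $\E_P[\sin X] = 0$, and the Lipschitz estimate $|h(y)-h(z)| \le \norm{h'}|y-z|$ — is routine.
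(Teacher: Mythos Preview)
The paper does not actually prove this lemma: it simply remarks that it is D\"obler's Proposition~3.13(a) for a general kernel $\eta(x)=\frac{1}{p(x)}\int_a^x\gamma(t)p(t)\,dt$ on an interval, and that the argument transfers unchanged to $\S^1$. Your plan---cancel $1/p(x)$, leave the circular kernel in the denominator, and control the numerator $\int_{-\pi}^x(h(y)-\E[h(X)])p(y)\,dy$ by $\norm{h'}\int_{-\pi}^x(\nu-y)p(y)\,dy$---is exactly the shape of D\"obler's argument, so at the level of strategy you are aligned with the paper's (outsourced) proof.

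However, the concrete mechanism you propose in the last paragraph does not work. You cannot bound $|h(y)-\E[h(X)]|\le\norm{h'}\,|y-\nu|$ pointwise: Lipschitz continuity gives $|h(y)-h(z)|\le\norm{h'}\,|y-z|$, but $\E[h(X)]\neq h(\nu)$ in general, so the triangle inequality applied termwise produces $\int_{-\pi}^x|\nu-y|\,p(y)\,dy$ rather than $\int_{-\pi}^x(\nu-y)\,p(y)\,dy$, and these differ. The actual argument (D\"obler's) is a CDF decomposition: writing $F(t)=\int_{-\pi}^t p$ and integrating by parts twice one obtains
\[
\int_{-\pi}^x(h(y)-\E[h(X)])p(y)\,dy=-(1-F(x))\int_{-\pi}^x h'(t)F(t)\,dt-F(x)\int_x^\pi h'(t)(1-F(t))\,dt,
\]
from which $\norm{h'}$ factors out and the remaining quantity $(1-F(x))\int_{-\pi}^x F+F(x)\int_x^\pi(1-F)$ equals $\int_{-\pi}^x(\nu-y)p(y)\,dy$ exactly. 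Incidentally, this also shows that $\int_{-\pi}^x(\nu-y)p(y)\,dy\ge 0$ for every $x$ (it vanishes at $\pm\pi$ and its derivative $(\nu-x)p(x)$ changes sign once), so your worry about it being unsigned is unfounded. Your ``truncated covariance'' formula $-\E[\tau(X)h'(X)\mathbb{I}\{X>x\}]$ is not the right identity either; replace that step with the CDF decomposition above and the rest of your outline goes through.
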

\begin{remark}
This result was formulated by D\"obler in \cite{dobler2015stein} Proposition 3.13 a). Particularly in D\"obler's work, he looked at a general kernel on an interval of $\R$ with closure $\overline{(a,b)}$. This kernel took the form
\[ \eta(x) = \dfrac{1}{p(x)}\int_a^x \gamma(t)p(t) dt. \]
In the proposition, D\"obler imposed conditions on the $\gamma$ that can be used; in particular, $\gamma$ is decreasing on $\overline{(a,b)}$. However, this condition is not necessary for part a) of the relevant proposition and instead relies upon properties of $h$ and the CDF of $p$. Therefore this lemma is easily translated from an interval onto the circle.
\end{remark}

\begin{theorem}\label{thm: bayesian comparison}
Let $X$ and $Y$ be circular random variables with Lebesgue densities $p_1,p_2$ respectively and $\mathrm{supp}(X)=\mathrm{supp}(Y)=\S^1$, define $\pi_0(x)=\frac{p_2(x)}{p_1(x)}$. Furthermore, let $\mu$ be the mean angle of $X$ and $\tau^c$ be the circular Stein kernel of $X$. Assume that $p_1,p_2$ and $\pi_0$ are differentiable everywhere on $\S^1$.  
Then we have the following bounds on the Wasserstein metric between $X$ and $Y$:
\[ |\E[\tau^c(X)\pi_0'(X)]|\leq d_W(Y,X) \leq \E[|\alpha(X)\pi_0'(X)\tau^c(X)|],	 \]
where 
\[ \alpha(x) = \dfrac{\int_{\mu-\pi}^x (\E[X]-y)p_1(y)dy}{\int_{\mu-\pi}^x\sin(\mu-y)p_1(y)dy}. \]
\end{theorem}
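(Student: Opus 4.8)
The plan is to derive both inequalities from the Stein equation together with Lemma~\ref{lemma: 3.13} and the integration-by-parts property of the circular Stein kernel (Lemma~\ref{lem:int by parts kernel}). First I would write, for any Lipschitz test function $h$ with $\norm{h'}\le 1$, the difference $\E[h(Y)]-\E[h(X)] = \E_{P_2}[\mathcal{T}_{p_1} f_h(X)]$ is \emph{not} zero (it is zero under $P_1$), and instead expand $\E[h(Y)]-\E[h(X)] = \int_{\S^1}(h-\E[h(X)])\,p_2\,dx = \int_{\S^1}(h-\E[h(X)])\,\pi_0\,p_1\,dx$. Using the Stein equation $\mathcal{T}_{p_1}f_h = h-\E[h(X)]$ and the identity $\mathcal{T}_{p_1}f_h = (f_h p_1)'/p_1$, this becomes $\int_{\S^1}(f_h p_1)'\pi_0\,dx$. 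Now integrate by parts on $\S^1$: since $f_h p_1$ is absolutely continuous and the circle has no boundary (the endpoint terms at $\pm\pi$ cancel by the wrapping identification, exactly as in the derivation of Lemma~\ref{lemma:zero}), we get $\int_{\S^1}(f_h p_1)'\pi_0\,dx = -\int_{\S^1} f_h\, p_1\, \pi_0'\,dx = -\E_{P_1}[f_h(X)\pi_0'(X)]$. Hence $\bigl|\E[h(Y)]-\E[h(X)]\bigr| = \bigl|\E[f_h(X)\pi_0'(X)]\bigr|$.

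For the \textbf{upper bound}, I would substitute $f_h = g_h\tau^c$ to obtain $\bigl|\E[h(Y)]-\E[h(X)]\bigr| = \bigl|\E[g_h(X)\tau^c(X)\pi_0'(X)]\bigr| \le \E\bigl[|g_h(X)|\,|\tau^c(X)\pi_0'(X)|\bigr]$, and then apply Lemma~\ref{lemma: 3.13} pointwise, which bounds $|g_h(x)|$ by $\norm{h'}\le 1$ times $|\alpha(x)|$ (noting $\alpha$ is precisely the ratio appearing in that lemma, up to the absolute value in the denominator which only strengthens the estimate). Taking the supremum over $h\in W$ then yields $d_W(Y,X)\le \E[|\alpha(X)\pi_0'(X)\tau^c(X)|]$. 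For the \textbf{lower bound}, I would exhibit a single admissible test function that makes the bound tight: take $h(x)$ with $h'(x) = \sin(\mu - x)$ (or rather, choose $h$ so that $f_h = \tau^c$). Indeed, by Lemma~\ref{lem:int by parts kernel} applied with $\phi = \pi_0$, we have $\E[\tau^c(X)\pi_0'(X)] = \E[\sin(\mu-X)\pi_0(X)]$ in the $\mu$ coordinate system — but more directly, solving the Stein equation for $h_0$ with $h_0 - \E[h_0(X)] = \sin(\mu-\mathrm{Id})$ gives $f_{h_0} = \tau^c$ by the very definition of the circular Stein kernel, and $\norm{h_0'} = \norm{\cos(\mu-\mathrm{Id})} = 1$, so $h_0\in W$. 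Then $\bigl|\E[h_0(Y)]-\E[h_0(X)]\bigr| = \bigl|\E[\tau^c(X)\pi_0'(X)]\bigr|$, and since this is one particular test function in $W$, it is at most $d_W(Y,X)$.

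The main obstacle I anticipate is making the boundary-free integration by parts on $\S^1$ fully rigorous under the stated hypotheses: one needs $f_h p_1$ absolutely continuous (which follows from $p_1, p_2, \pi_0$ differentiable, hence $f_h$ absolutely continuous via the explicit formula~\eqref{eq:solution}), and one needs the product $f_h p_1 \pi_0$ to be genuinely periodic so the endpoint contributions vanish — this is where the $\mu$ coordinate identification and the hypothesis $\mathrm{supp}(X)=\mathrm{supp}(Y)=\S^1$ (so $p_1 > 0$ everywhere and $\pi_0$ is well-defined and smooth) are essential. A secondary technical point is justifying the interchange of supremum and the pointwise bound from Lemma~\ref{lemma: 3.13}: one must check that $g_h = f_h/\tau^c$ is well-defined and that the bound holds at points where $\tau^c$ vanishes (only $x = \mu\pm\pi$), which requires the $\E[X]-y$ numerator to vanish there as well, or an appropriate limiting argument — this is already handled inside the proof of Lemma~\ref{lemma: 3.13} as stated. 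Finally, care is needed that $\alpha(x)$ as written (with $\int\sin(\mu-y)p_1\,dy$ in the denominator, without absolute value) still gives a valid upper bound; since $|g_h(x)| \le |\alpha(x)|$ and the outer expectation already has an absolute value, replacing $|\alpha|$ by $\alpha$ inside is immaterial once the whole integrand is wrapped in $|\cdot|$.
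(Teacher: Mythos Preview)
Your proposal is correct and follows the same overall architecture as the paper: the lower bound via the specific Lipschitz test function $h_0=-\sin+\text{const}$ (equivalently, the paper plugs $\sin$ directly into the Wasserstein definition and then invokes Lemma~\ref{lem:int by parts kernel}), and the upper bound via the identity $\E[h(Y)]-\E[h(X)]=-\E[f_h\cdot(\text{something})]$ together with Lemma~\ref{lemma: 3.13}.

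The one methodological difference worth noting is in how that upper-bound identity is obtained. You integrate by parts directly: $\int_{\S^1}(f_hp_1)'\pi_0\,dx=-\int_{\S^1}f_h p_1\pi_0'\,dx=-\E_{P_1}[f_h(X)\pi_0'(X)]$, landing immediately on the expression in the theorem statement. The paper instead compares the two Stein operators, verifying $f_h\in\F(P_2)$ and using $\mathcal{T}_2 f_h-\mathcal{T}_1 f_h=(\log\pi_0)'f_h$ to get $\E[h(Y)]-\E[h(X)]=-\E_{P_2}[f_h(Y)(\log\pi_0)'(Y)]$; this is an expectation under $P_2$ with $(\log\pi_0)'$ rather than under $P_1$ with $\pi_0'$, and the two are reconciled by the change of measure $\E_{P_2}[g(\log\pi_0)']=\E_{P_1}[g\pi_0(\log\pi_0)']=\E_{P_1}[g\pi_0']$. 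Your route is arguably more economical here since it avoids checking $f_h\in\F(P_2)$ and delivers the theorem's form without that extra change-of-measure step; the paper's operator-comparison viewpoint, on the other hand, makes the connection to the Ley--Reinert--Swan framework more transparent. Both are equally valid.
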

Note that $\alpha$ is expressed in $\mu$\textit{-coordinates}.
\begin{proof}
We begin by proving the lower bound.

First, note that since sine is a Lipschitz continuous function with a Lipschitz constant of 1,
\[ |\E[\sin(Y)]-\E[\sin(X)]|\leq d_W(Y,X).\]
Moreover, since $\mu$ is the mean angle of $X$, the second expectation on the left hand side is 0. For the first expectation,
\begin{align*}
\E[\sin(Y)]&= \int_{\S^1}\sin(x)p_2(x) dx,\\
&=\int_{\S^1}\sin(x) \dfrac{p_2(x)}{p_1(x)}p_1(x)dx,\\
&=\E[\sin(X)\pi_0(X)].
\end{align*}
Then by applying Lemma \ref{lem:int by parts kernel} with $\phi=\pi_0$, we obtain the lower bound. 

For the upper bound, let $(\F_1,\mathcal{T}_1)$ and $(\F_2,\mathcal{T}_2)$ be the Stein pairs of $X$ and $Y$ respectively. Then by Definition of the Stein equation, one clearly sees that $f_h:=\mathcal{T}^{-1}_1(h-\E[h(X)])\in \F_1 $ since $h-\E[h(X)]\in \F^0_1$. We need to verify that $f_h\in \F_2$:

First $f'\in L^1(dx)$ and $f_h$ is differentiable everywhere on $\S^1$ already, because $f_h\in \F_1(p)$. Furthermore $\int_{\S^1} (fp_2)' dx = fp_2|_{-\pi}^\pi=0$ by continuity.
Whence, we can conclude that $f_h \in \F_2$, and more importantly $f_h \in \F_1 \cap \F_2$.
Using this fact, we wish to relate the Stein operators of $X$ and $Y$; $ \mathcal{T}_2(f_h)(x) = \dfrac{\partial f_h(x)}{\partial x} + \dfrac{p_2'(x)}{p_2(x)}f_h(x) $ and
$\mathcal{T}_1(f_h)(x) = \dfrac{\partial f_h(x)}{\partial x} + \dfrac{p_1'(x)}{p_1(x)}f_h(x). $ One can clearly see that both operators share a common term of $f_h'(x)$, and so
\begin{equation}\label{eq: operators}
\mathcal{T}_2(f_h) - \mathcal{T}_2(f_h) = (\log \pi_0)'f_h.
\end{equation}
Now, by definition of the Stein equation \eqref{eq:solution},
\begin{align*}\label{eq:gh}
\E[h(Y)]- \E[h(X)]&= \E[\mathcal{T}_1(f_h)(Y)],\\
&= \E[\mathcal{T}_1(f_h)(Y)]- \E[\mathcal{T}_2(f_h)(Y)],\\
&=-\E[f_h(Y)(\log\pi_0)'(Y))],\\
&=-\E\bigg[\tau^c(Y)\dfrac{f_h(Y)}{\tau^c(Y)}(\log\pi_0)'(Y)\bigg]\numberthis.
\end{align*}
The second equality is due to the fact that $\E[\mathcal{T}_2(f_h)(Y)]=0$ --- since $f_h \in \F_1 \cap \F_2$, and in the third equality we have used Equation (\ref{eq: operators}). Define the quantity $g_h = f_h/\tau^c$.

Now, using Lemma \ref{lemma: 3.13},
\begin{equation}\label{eq:gh bound}
|g_h(x)|\leq \norm{h'}|\alpha(x)|.
\end{equation}
Compiling \eqref{eq:gh} and \eqref{eq:gh bound} together we obtain the upper bound,
\begin{align*}
d_W(Y,X) &\leq \sup_{h : \norm{h'}\leq 1} \norm{h'} \E[|\tau^c(Y) \alpha(Y)(\log\pi_0)'(Y)|],\\
&=\E[|\tau(Y) \alpha(Y)(\log\pi_0)'(Y)|].
\end{align*}
\end{proof}
\begin{remark}
For probability measures on $\R$, $p_1$ and $p_2$ must obey the following requirements in Theorem 3.1 of \cite{ley2017distances}: 
\[
\bigg(\pi_0(x) \int_{-\pi}^x (h(y)-\E[h(X_1)])p_1(y) dy\bigg)' \in L^1(P_2) 
\]
and 
\[
\lim_{x \rightarrow \pi} \pi_0(x) \int_{-\pi}^x (h(y)-\E[h(X_1)])p_1(y) dy=0.
\]
Both the first and second assumptions are not essential when looking at $\S^1$. This is due to the compactness of the circle and the continuity of functions at $-\pi$ and $\pi$.
\end{remark}

It is worth mentioning that $\alpha(x)$ is not a bounded function --- it has singularities at $x=\pm\pi$. To tackle this problem, we will be multiplying it by an auxiliary function that comes about as a result of $(\log p)'$. For example, in both von-Mises and Bingham cases, $(\log p)'$ will contain a sine function to assist in removing the singularity.

\begin{lemma}\label{lemma:alpha}
Let $X$ be a random variable on $\S^1$ with Lebesgue density $p$ such that $p(x)\neq 0$ on $\S^1$. Suppose, without loss of generality, that $\E[X]=0$ in the Euclidean sense after making use of an appropriate chart. Then the function $|\sin(x) \alpha(x)|\leq 2\pi$ and attains this maximum at $x=\pm 2\pi$.
\end{lemma}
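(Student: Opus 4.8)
The plan is to work in the chart furnished by the hypothesis, so that simultaneously $\int_{-\pi}^{\pi} y\,p(y)\,dy = 0$ (this is $\E[X]=0$ in the Euclidean sense) and, by the $\mu$-coordinate convention underlying $\alpha$, $\int_{-\pi}^{\pi}\sin(y)\,p(y)\,dy = 0$. Using these two moment identities to flip the integrals in the definition of $\alpha$, I would first rewrite it in the more symmetric form $\alpha(x) = N(x)/D(x)$ with $N(x):=\int_{x}^{\pi} y\,p(y)\,dy$ and $D(x):=\int_{x}^{\pi}\sin(y)\,p(y)\,dy$. Since $N'(x)=-x\,p(x)$ and $D'(x)=-\sin(x)\,p(x)$, both $N$ and $D$ increase on $(-\pi,0)$ and decrease on $(0,\pi)$, while $N(\pm\pi)=D(\pm\pi)=0$ by the two identities; as $p>0$, this forces $N(x),D(x)>0$ for $x\in(-\pi,\pi)$, with $N$ vanishing to first order and $D$ to second order at $x=\pm\pi$ because $\sin(\pm\pi)=0$. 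In particular $\alpha\ge 0$ on $\S^{1}\setminus\{\pm\pi\}$, so $|\sin(x)\alpha(x)| = |\sin x|\,\alpha(x)$, and it suffices to prove $|\sin x|\,N(x)\le 2\pi\,D(x)$; by the mirror symmetry of the two moment identities it is enough to treat $x\in[0,\pi)$, where the target is $\sin(x)\,N(x)\le 2\pi\,D(x)$.

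For this I would integrate both $N$ and $D$ by parts against the upper tail $\bar F(x):=\int_{x}^{\pi}p = 1-F(x)$, which is non-negative and non-increasing, obtaining $N(x) = x\bar F(x) + \int_{x}^{\pi}\bar F(y)\,dy$ and $D(x) = \sin(x)\bar F(x) + \int_{x}^{\pi}\cos(y)\bar F(y)\,dy$. Substituting, the inequality $\sin(x)N(x)\le 2\pi D(x)$ is equivalent to
\[
(2\pi - x)\,\sin(x)\,\bar F(x) \;+\; \int_{x}^{\pi}\big(2\pi\cos y - \sin x\big)\,\bar F(y)\,dy \;\ge\; 0 .
\]
The first term is non-negative. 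On the part of $[x,\pi]$ lying to the left of $\arccos(\tfrac{\sin x}{2\pi})$ (which is near $\pi/2$) the bracket $2\pi\cos y-\sin x$ is non-negative and only helps; on the remaining part near $y=\pi$ I would bound $\bar F(y)\le\bar F(x)$ and absorb the resulting bounded negative contribution into $(2\pi - x)\sin(x)\bar F(x)$, exploiting $x\le\pi<2\pi$ to leave slack. These estimates are comfortable for $x$ bounded away from $\pi$.

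The one place requiring real care — and the step I expect to be the main obstacle — is the regime $x\to\pi$ (and, symmetrically, $x\to-\pi$): there $\sin x$, $N(x)$, $D(x)$ are of orders $O(\pi-x)$, $O(\pi-x)$, $O((\pi-x)^{2})$ respectively, so the inequality degenerates to an equality at leading order and the crude bound $\bar F(y)\le\bar F(x)$ no longer leaves room. I would handle this by a local Taylor expansion in $\varepsilon:=\pi-x$: $N(x)=\pi p(\pi)\varepsilon+O(\varepsilon^{2})$, $D(x)=\tfrac12 p(\pi)\varepsilon^{2}+O(\varepsilon^{3})$, $\sin x=\varepsilon+O(\varepsilon^{3})$, so that $\sin(x)\alpha(x)=2\pi+O(\varepsilon)$; pushing the expansion one further order pins down the sign of the $O(\varepsilon^{3})$ remainder of $2\pi D(x)-\sin(x)N(x)$ and closes the argument in a neighbourhood of $x=\pi$, using the continuity (and, where needed, the periodic matching forced on $p$ and $p'$ by the two moment identities) at the seam. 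The same expansion yields $\lim_{x\to\pm\pi}|\sin(x)\alpha(x)| = 2\pi$, showing that $2\pi$ is sharp and is attained at the antipode $x=\pm\pi$ of $\mu$, which completes the argument.
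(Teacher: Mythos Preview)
Your route is genuinely different from the paper's. The paper computes $\lim_{x\to-\pi}|\sin(x)\alpha(x)|$ via two applications of L'H\^opital and then asserts that $(\sin(x)\alpha(x))'$ vanishes only at $x=\pm\pi$, concluding that the boundary value $2\pi$ is the global maximum. You instead rewrite $\alpha=N/D$, integrate by parts against the tail $\bar F$, and aim at the pointwise inequality $|\sin x|\,N(x)\le 2\pi D(x)$.

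Two steps in your sketch do not close. In the ``middle range'' you propose to drop the non-negative contribution on $[x,y_0]$ (with $y_0=\arccos(\sin x/2\pi)$) and bound the negative part on $[y_0,\pi]$ via $\bar F(y)\le\bar F(x)$. Carrying this out, the residual you must show non-negative is
\[
\bar F(x)\Big[\sin x\,(\pi-x+y_0)\;-\;2\pi\sin y_0\Big],
\]
and for $x$ near $0$ the bracket is approximately $\tfrac{3\pi}{2}\sin x-2\pi<0$. The positive piece on $[x,y_0]$ is therefore not slack to be thrown away; it is needed, and your absorption into $(2\pi-x)\sin(x)\bar F(x)$ fails. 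The ``mirror symmetry'' reduction to $[0,\pi)$ is also not symmetry of $p$; what actually works is the parallel integration-by-parts identity against $F$ on $[-\pi,x]$, which you should state rather than invoke a symmetry that is not assumed.

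More seriously, the Taylor step near $x=\pi$ does not ``pin down the sign''. With $\varepsilon=\pi-x$ one finds
\[
2\pi D(x)-\sin(x)\,N(x)\;=\;\Big(\tfrac{p(\pi)}{2}-\tfrac{\pi\,p'(\pi)}{6}\Big)\varepsilon^{3}+O(\varepsilon^{4}),
\]
and nothing in the hypotheses ($p>0$, $\E[X]=0$, $\E[\sin X]=0$) forces $p'(\pi)\le 3p(\pi)/\pi$. If that coefficient is negative, your target inequality fails on a one-sided neighbourhood of $\pi$, so the local expansion cannot conclude. The paper's proof sidesteps this region by its unproven global claim about the zeros of $(\sin(x)\alpha(x))'$; your more explicit local analysis exposes precisely why the boundary behaviour is delicate, but as written it does not resolve it.
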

\begin{proof}
We begin by bounding the function from above.
\begin{align*}
\lim_{x \rightarrow -\pi} \alpha(x) \sin(x)&= \lim_{x \rightarrow -\pi}\dfrac{\sin(x)\int_{-\pi}^x -y p(y)dy}{\int_{-\pi}^x-\sin(y)p(y)dy},\\
 &=\lim_{x \rightarrow -\pi} \dfrac{\cos(x)\int_{-\pi}^x -y p(y)dy -x\sin(x) p(x) }{-\sin(x) p(x)}.
\end{align*}
Then when looking at the absolute value of the function,
\begin{align*}
\lim_{x \rightarrow -\pi} |\alpha(x) \sin(x)|&=\lim_{x \rightarrow -\pi}\dfrac{|\cos(x)\int_{-\pi}^x -y p(y)dy -x\sin(x) p(x)| }{|\sin(x) p(x)|},\\
&\leq \lim_{x \rightarrow -\pi} \dfrac{|\cos(x)\int_{-\pi}^x -y p(y)dy|}{|\sin(x) p(x)|} + \pi,\\
&=\lim_{x \rightarrow -\pi} \dfrac{-\cos(x)}{p(x)}\lim_{x \rightarrow -\pi} \dfrac{|\int_{-\pi}^x -y p(y)dy|}{|\sin(x)|} +\pi,\\
\overset{\text{L'h\^{o}p}}&{=} \lim_{x \rightarrow -\pi} \dfrac{xp(x)}{\cos(x)} \lim_{x \rightarrow -\pi} \dfrac{1}{p(x)} + \pi,\\
&=2\pi.
\end{align*}
To show that this is indeed a maximum, first note that the function $\alpha(x)\sin(x)$ is 0 at 0. Denote $m(x)=\int_{-\pi}^x -y p(y)dy$ and $s(x)=\int_{-\pi}^x-\sin(y)p(y)dy$, then
\[ (\alpha(x) \sin(x))' = \frac{1}{s(x)}\bigg(x \sin(x)p(x) - \frac{m(x)}{s(x)} \sin(x) p(x) + \frac{m(x)}{s(x)} \cos(x)\bigg) \]
which is 0 if and only if 
\[ s(x) x \sin(x) p(x) - m(x) \sin(x) p(x) + \cos(x) m(x)=0. \]
This only occurs at the point $x=\pm \pi$, and because $\alpha(-\pi)\sin(-\pi)>0$, it attains a maximum. 
\end{proof}

Note that the assumption of $\E[X]=0$ is satisfied if we transformed to $\mu$-coordinates, since the function $f(x)=x$ is anti-symmetric about the origin.

\subsection{Applications of Theorem \ref{thm: bayesian comparison}}

We begin with a simple example that illustrates the use of Theorem \ref{thm: bayesian comparison}.
\begin{example}\label{exmp: vm bing}
Suppose $X \sim VM(0,\kappa)$ and $Y \sim Bing(0,\zeta)$, the Bingham distribution discussed in Example \ref{exmp: bing}. The ratio of the two densities is $\pi_0 =\Exp(\zeta \cos^2 x - \kappa \cos x)$  and it is easily computed $(\log\pi_0)'= -\sin x(2\zeta \cos x - \kappa)$.

Then, by Theorem \ref{thm: bayesian comparison}, we have that
\[ d_W(Y,X) \leq \dfrac{1}{\kappa} (2\zeta + \kappa) \sup_{x \in \S^1} | \alpha(x) \sin(x)|, \]
where we have used the upper bound from inequality (\ref{ineq: kappa}) to bound the kernel. Then applying Lemma \ref{lemma:alpha} gives a final bound of
\[ d_W(Y,X) \leq \dfrac{4 \pi \zeta}{\kappa} + 2 \pi. \]
\end{example}

In this next example, we shall use Theorem \ref{thm: bayesian comparison} to compare two Bayesian posterior densities with the same likelihood, but different priors. This is an analogous example to Section 4.2 of \cite{ley2017distances} which discusses the influence of priors on a normal model of data for a von-Mises model and von-Mises prior.

Inference of this type was first performed in \cite{mardia1976bayesian} which focussed on the von-Mises Fisher distribution and classes of priors that could be applied to give analytic results. More recently, a more relevant inference has been performed in \cite{damien1999full} which specifically uses a von-Mises model (on $\S^1$) with von-Mises prior.
This type of inference has been previously applied to finding the location of airplane locator transmitters \cite{guttorp1988finding}.
 This example will be have a base von-Mises model, and we wish to compare a uniform prior with a von-Mises prior. 
\begin{example}\label{exmp:bayes}
Let $X_1,...,X_n$ be iid from a $VM(\mu,\kappa)$ distribution. For the purposes of this example, we will be interested in making inference about the mean angle $\mu$. 
The likelihood of $\mu$ is calculated to be
\begin{align*}\label{eq:likelihood}
L(\mu;x)&\propto \prod_{i=1}^n\Exp(\kappa\cos(x_i-\mu)),\\
&=\Exp\big(\kappa R\cos(\mu-\psi)\big)\numberthis,
\end{align*}
with \[R^2=n^2(\bar{C}^2+\bar{S^2}),\; \tan\psi=\dfrac{\bar{S}}{\bar{C}},\] where	 \[\bar{C}=\dfrac{1}{n}\sum_{i=1}^n\cos(x_i),\; \bar{S}=\dfrac{1}{n}\sum_{i=1}^n\sin(x_i).\]
Note that the form of the likelihood (\ref{eq:likelihood}) is that of a von-Mises with location and scale parameters $\psi$ and $\kappa R$ respectively.

With the likelihood set up, we will select two priors on $\mu$. The first prior will be the uniform prior on $\S^1$; $\pi_1(\mu) = \frac{1}{2\pi}$. The other shall be an independent von-Mises random variable; $\mu \sim VM(0,\kappa^*)$. We shall name these models model 1 and model 2 respectively.

For model 1, the posterior density is unchanged, $\mu|X_1,...,X_n \sim VM(\psi,\kappa R)$.
For model 2, the posterior density will change. Similarly to calculations performed above, one can check that 
\begin{align*}
\pi_2(\mu|x) & \propto \Exp\big(\kappa R \cos(\mu-\psi)\big) \Exp\big(\kappa^* \cos\mu\big),\\
&=\Exp\big(R^*\cos(\mu-\psi^*)\big),
\end{align*}
with
\[ {R^*}^2=\kappa^2R^2+(\kappa^*)^2+2\kappa\kappa^*R\cos\psi,\; \tan\psi^*=\dfrac{\kappa R\sin\psi}{\kappa R \cos\psi + \kappa^*};\]
In other words, $\mu|X_1,...,X_n \sim VM(\psi',R')$ under model 2.

Now we can apply Theorem \ref{thm: bayesian comparison} with $X\sim VM(\psi,\kappa R)$ and $Y \sim VM(\psi^*,R^*)$. The quotient between the densities of these two random variables exists and is differentiable everywhere on $\S^1$; $\pi_0\propto \Exp(\kappa^*\cos\mu).$ Using inequality (\ref{ineq: kappa}) we know that, in our case, $\tau^c(x)\leq 1/(\kappa R)$.
Then using the fact that $\sup_{x \in \S^1}|\alpha(x)\sin(x)|=2\pi$, the bound on the Wasserstein metric is
\[ d_W(Y,X)\leq \dfrac{2 \kappa^* \pi}{\kappa n \sqrt{\bar{C}^2+\bar{S}^2}}. \]
This shows a convergence rate of $O(n^{-1})$ for the Wasserstein metric between model 1 and model 2 since $\bar{C}^2 + \bar{S}^2$ is $O(1)$. In other words, the effect of the prior becomes negligible as $n$ increases to infinity.
\end{example} 
As previously mentioned, this example is reminiscent to that of Section 4.2 of \cite{ley2017distances} where Ley et. al. discuss comparing Bayesian posteriors for normal models. Similarly to above, they compare a posterior with uninformative uniform prior against a posterior with normal conjugate prior. Their results show a convergence of $O(n^{-3/2})$ compared to ours of $O(n^{-1})$. This discrepancy in orders is due to the fact that we are not calculating the expectation $\E[|\alpha(X)\pi_0'(X)\tau^c(X)|]$ and are instead bounding it above by some enveloping function; thus, giving us a less optimal order bound.
\begin{example}\label{exmp:wn}
In this example, we shall be comparing the von-Mises distribution with the wrapped normal distribution. In circular statistics, the von-Mises distribution with scale parameter $\kappa$ has been used to approximate the wrapped normal distribution by $\kappa$. One question of interest is how big must $\kappa$ be in order for the von-Mises distribution to be a good approximation for wrapped normal distribution?
Let $Z$ be a wrapped normal distribution with 0 mean and variance $\sigma^2$, denoted $Z \sim WN(0,\sigma^2)$ formulated in the following way \cite{mardia2009directional}:
Suppose $U$ is a distribution on $\R$, then the wrapping of $U$ denoted $V$ is $V=U\; \mathrm{mod}\; 2\pi$. If $U$ has Lebesgue density $p_U$, the Lebesgue density of $V$ is
\[ p_V(\theta)=\sum_{k=-\infty}^\infty p_U(\theta +2\pi k),\quad \theta \in [-\pi,\pi). \]
For our specific example, $U$ is a normal distribution and so the wrapped density is
\[ p_{WN}(\theta) = \dfrac{1}{\sigma \sqrt{2\pi}}\sum_{k=-\infty}^\infty \Exp\bigg(\dfrac{-(\theta+2\pi k)^2}{2 \sigma^2}\bigg).\]
An alternative, and more useful, form of this pdf is to instead use the Jacobi triple product:
\begin{align*}
p_{WN}(\theta) &= \dfrac{1}{2\pi}\prod_{n=1}^\infty (1-e^{-\sigma^2n})(1+e^{-\sigma^2(n-1/2)}e^{i\theta})(1+e^{-\sigma^2(n-1/2)}e^{-i\theta}),\\
&=\dfrac{1}{2\pi}\prod_{n=1}^\infty (1-e^{-\sigma^2n})(1+e^{-2\sigma^2(n-1/2)}+2\cos\theta e^{-\sigma^2(n-1/2)}).
\end{align*}
The quantity we need to calculate in order to use Theorem \ref{thm: bayesian comparison} is $(\log p_{WN})'.$
To begin,
\[\log p_{WN}(\theta)=-\log2\pi + \sum_{n=1}^\infty \log(1-e^{-\sigma^2n}) + \log(1+e^{-2\sigma^2(n-1/2)}+2\cos\theta e^{-\sigma^2(n-1/2)}).\]
The derivative of this is
\begin{align*}\label{eq:wn log der}
(\log p_{WN})'(\theta) &= \sum_{n=1}^\infty \dfrac{-2\sin\theta e^{-\sigma^2(n-1/2)}}{1+e^{-2\sigma^2(n-1/2)}+2\cos\theta e^{-\sigma^2(n-1/2)}},\\
&=\sum_{n=1}^\infty\dfrac{- \sin\theta}{ \cosh(\sigma^2(n-\frac{1}{2})) + \cos\theta }.\numberthis
\end{align*}
By Theorem \ref{thm: bayesian comparison}, an upper bound on the Wasserstein metric between $X\sim VM(0,\kappa)$ and $Z \sim WN(0,\sigma^2)$ is
\begin{align*}
d_W(Z,X) &\leq \dfrac{1}{\kappa}\sup_{\theta \in \S^1} |\alpha(\theta)\big( (\log p_{WN}(\theta))' - (\log p_{VM}(\theta))'\big)|,\\
&\leq \dfrac{1}{\kappa}\bigg(\sup_{\theta \in \S^1} |\alpha(\theta) (\log p_{WN}(\theta)'|  + \sup_{\theta \in \S^1} | \alpha(\theta) (\log p_{VM}(\theta))'|\bigg)
\end{align*}
\[  \]
The second part has already been computed as $ 2 \pi\kappa$ from the past two previous examples. For the first term, we may split it up into two parts:
\[\sup_{\theta \in \S^1} |\alpha(\theta)(\log p_{WN}(\theta))'| =\sup_{\theta \in \S^1} |\alpha(\theta)\sin(\theta)|\sup_{\theta \in \S^1} \sum_{n=1}^\infty \dfrac{1}{\cosh(\sigma^2(n-\frac{1}{2}))+\cos(\theta)}, \]
from the continuity of both functions. Note, for all $x \in \R$, $\cosh x \geq 1$ and $|\cos x| \leq 1$ then since $\sigma^2>0$ and $n \geq 1$, $\cosh(\sigma^2(n-1/2)) >1$ whence the denominator is strictly positive. In order to maximize the value of the sum, we must minimize the denominator in $\theta$. This occurs when $\cos\theta = -1$.
The sum in this case can be bounded above; using Taylor series about 0,
\begin{align*}\label{eq:wn bound}
\sum_{n=1}^\infty \dfrac{1}{\cosh(\frac{\sigma^2}{2}(2n-1))-1}&=\sum_{n\in \mathbb{N}\setminus 2 \mathbb{N}} \dfrac{1}{\cosh(\frac{\sigma^2}{2}n)-1},\\
&\leq \sum_{n\in \mathbb{N}\setminus 2 \mathbb{N}} \dfrac{1}{\frac{1}{2}(\frac{\sigma^2}{2}n)^2},\\
&=\dfrac{8}{\sigma^4}\sum_{n\in \mathbb{N}\setminus 2 \mathbb{N}} \dfrac{1}{n^2},\\
&=\dfrac{\pi^2}{\sigma^4}.\numberthis
\end{align*}
Combining this with the previous supremum yields
\begin{align*}
d_W(Z,X) &\leq  \dfrac{1}{\kappa}\bigg(\sup_{\theta \in \S^1} |\alpha(\theta)\sin(\theta)|\sup_{\theta \in \S^1} \sum_{n=1}^\infty \dfrac{1}{\cosh(\sigma^2(n-\frac{1}{2}))+\cos(\theta)}  + \sup_{\theta \in \S^1} | \alpha(\theta) \kappa \sin\theta|\bigg),\\
&\leq \dfrac{1}{\kappa}\bigg(\dfrac{2 \pi^3}{\sigma^4} + 2 \pi \kappa\bigg)=\dfrac{2 \pi^3}{\kappa \sigma^4}+2\pi.
\end{align*}
When $\kappa=\frac{1}{\sigma^2}$ and we let $\kappa \rightarrow \infty$, the Wasserstein metric is bounded by
\[ d_W(Z,X) \leq 2\pi. \]

\end{example}

\begin{corollary} \label{thm:wrapped normal}
Let $X~\sim VM(0,\kappa)$ and $Z \sim WN(0,\sigma^2)$. Then the Wasserstein distance between $X$ and $Z$ is bounded above,
\[ d_W(Z,X) \leq \dfrac{2 \pi^3}{\kappa \sigma^4} + 2 \pi. \]
\end{corollary}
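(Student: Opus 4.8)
The plan is to observe that this Corollary is exactly the estimate reached at the end of Example \ref{exmp:wn}, so the proof amounts to invoking Theorem \ref{thm: bayesian comparison} and then carrying out the two supremum computations performed there. First I would apply Theorem \ref{thm: bayesian comparison} with target $X \sim VM(0,\kappa)$ and comparison variable $Z \sim WN(0,\sigma^2)$, so that $\pi_0 = p_{WN}/p_{VM}$ and $(\log\pi_0)' = (\log p_{WN})' - (\log p_{VM})'$. Using the von-Mises kernel bound $\tau^c(x) \le 1/\kappa$ from \eqref{ineq: kappa} and then replacing the expectation by a supremum, the upper bound in Theorem \ref{thm: bayesian comparison} becomes
\[ d_W(Z,X) \le \frac{1}{\kappa}\sup_{\theta \in \S^1} \bigl| \alpha(\theta)\bigl( (\log p_{WN})'(\theta) - (\log p_{VM})'(\theta) \bigr) \bigr|, \]
which I would split into a von-Mises term and a wrapped normal term by the triangle inequality.

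For the von-Mises term, $(\log p_{VM})'(\theta) = -\kappa \sin\theta$, so $|\alpha(\theta)(\log p_{VM})'(\theta)| = \kappa\,|\alpha(\theta)\sin\theta| \le 2\pi\kappa$ by Lemma \ref{lemma:alpha} (the hypothesis $\E[X]=0$ holds since we work in $\mu$-coordinates). For the wrapped normal term, I would first rewrite $p_{WN}$ via the Jacobi triple product to obtain the closed form $(\log p_{WN})'(\theta) = -\sum_{n=1}^\infty \sin\theta/(\cosh(\sigma^2(n-\tfrac12)) + \cos\theta)$; since $\cosh(\sigma^2(n-\tfrac12)) > 1$ each denominator is strictly positive, so by continuity $|\alpha(\theta)(\log p_{WN})'(\theta)| = |\alpha(\theta)\sin\theta|\sum_{n\ge1}(\cosh(\sigma^2(n-\tfrac12))+\cos\theta)^{-1} \le 2\pi\sum_{n\ge1}(\cosh(\sigma^2(n-\tfrac12))+\cos\theta)^{-1}$.

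It remains to bound the series uniformly in $\theta$. Each summand increases as $\cos\theta$ decreases, so the worst case is $\cos\theta = -1$, where the series equals $\sum_{n\ge1}(\cosh(\tfrac{\sigma^2}{2}(2n-1))-1)^{-1}$; using the elementary estimate $\cosh x - 1 \ge x^2/2$ together with $\sum_{n\text{ odd}} n^{-2} = \pi^2/8$ this is at most $\pi^2/\sigma^4$, giving $2\pi^3/\sigma^4$ for the wrapped normal term. Combining the two terms yields $d_W(Z,X) \le \frac{1}{\kappa}\bigl(2\pi^3/\sigma^4 + 2\pi\kappa\bigr) = 2\pi^3/(\kappa\sigma^4) + 2\pi$. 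The only genuinely non-routine step is extracting the clean series representation of $(\log p_{WN})'$ from the triple product and then locating the worst-case $\theta$ and running the $\cosh x - 1 \ge x^2/2$ comparison cleanly; the rest is direct substitution into Theorem \ref{thm: bayesian comparison} and Lemma \ref{lemma:alpha}.
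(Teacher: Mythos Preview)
Your proposal is correct and follows exactly the argument given in Example~\ref{exmp:wn}: apply Theorem~\ref{thm: bayesian comparison}, bound $\tau^c$ by $1/\kappa$ via \eqref{ineq: kappa}, split $(\log\pi_0)'$ into its von-Mises and wrapped normal parts, control each with Lemma~\ref{lemma:alpha}, and handle the wrapped normal series using the Jacobi triple product together with $\cosh x - 1 \ge x^2/2$ and $\sum_{n\text{ odd}} n^{-2} = \pi^2/8$. There is nothing to add.
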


\begin{example}\label{exmp:wc}
Let $Z \sim WN(0,\sigma^2)$ and $X \sim WC(0,\gamma)$, the wrapped Cauchy distribution which is defined similarly to the wrapped normal distribution in Example \ref{exmp:wn}. The wrapped Cauchy distribution has probability density 
\[ p_{WC}(\theta) = \dfrac{1}{2\pi}\dfrac{\sinh\gamma}{\cosh\gamma-\cos\theta},\quad \theta \in \S^1,\gamma>0 \]
which can be obtained by applying the geometric series formula on to the pdf when written in terms of the characteristic function.

We shall be using the wrapped Cauchy as our basis for the comparison; it has circular Stein kernel equal to 
\[ \tau^c(\theta) = (\cosh\gamma-\cos\theta)\log\bigg(\dfrac{\cosh\gamma-\cos\theta}{\cosh\gamma+1}\bigg) \]
and its log derivative is
\[ (\log p_{WC})'(\theta) = -\dfrac{\sin\theta}{\cosh\gamma-\cos\theta}. \]
Since this derivative contains a sine function, we may apply Theorem \ref{thm: bayesian comparison} and Lemma \ref{lemma:alpha} together with the log derivative of the wrapped normal pdf \eqref{eq:wn log der} to obtain a bound on the Wasserstein metric
\begin{align*}
d_W(Z,X) \leq & 2\pi\sup_{\theta \in \S^1} \bigg|(\cosh\gamma-\cos\theta)\log\bigg(\dfrac{\cosh\gamma-\cos\theta}{\cosh\gamma+1}\bigg)\bigg|\\
&\times \sup_{ \theta \in \S^1} \bigg|\dfrac{1}{\cosh\gamma-\cos\theta}+ \sum_{n=1}^\infty\dfrac{1}{ \cosh(\sigma^2(n-\frac{1}{2})) + \cos\theta }\bigg|.
\end{align*}
We next apply the bound obtained in \eqref{eq:wn bound} to bound this from above;
\begin{align*}
d_W(Z,X) \leq & 2\pi\sup_{\theta \in \S^1} \bigg|(\cosh\gamma-\cos\theta)\log\bigg(\dfrac{\cosh\gamma-\cos\theta}{\cosh\gamma+1}\bigg)\bigg|\\
&\times \bigg(\dfrac{1}{\cosh\gamma-1} + \dfrac{\pi^2}{\sigma^4}\bigg).
\end{align*}
Maximizing the first term then yields an upper bound on the Wasserstein metric,
\[ d_W(Z,X)\leq
\begin{cases}
\dfrac{2\pi}{e}(\cosh\gamma +1) \bigg(\dfrac{1}{\cosh \gamma -1}+ \dfrac{\pi^2}{\sigma^4}\bigg), & 0<\gamma \leq \mathrm{arccosh}\bigg(\dfrac{e+1}{e-1}\bigg),\\
2\pi(\cosh\gamma -1)\log\bigg(\dfrac{\cosh\gamma -1}{\cosh \gamma +1}\bigg)\bigg(\dfrac{1}{\cosh \gamma -1}+ \dfrac{\pi^2}{\sigma^4}\bigg),& \mathrm{otherwise}.
\end{cases}
\]
As $\gamma \rightarrow \infty$ and $\sigma^2 \rightarrow \infty$, we see that the Wasserstein metric tends to 0 which is something that one would expect, since both densities of the wrapped normal and Cauchy both tend towards the circular uniform density in these limits.

\end{example}

\section{Discussion}

To conclude, we provide a short discussion on the drawbacks of this particular approach as well as future directions of research.

As previously mentioned in Example \ref{exmp:bayes}, we were not able to obtain a bound with optimal order of $n$. This is a consequence of the route with which we took when bounding the expectation $\E[|\alpha(X)\pi_0'(X)\tau^c(X)|]$ since $\alpha$ is an intractable function. Moreover, integrals involving $\tau^c$ will also further complicate calculations. This is clearly demonstrated in Examples \ref{exmp: vm bing} and \ref{exmp:wn} where our bound is of $O(1)$. However, carefully constructed examples like Examples \ref{exmp:bayes} and \ref{exmp:wc} do show that convergence of the Wasserstein metric to 0 is possible. Computational methods could be utilised instead of brute force bounds to calculate $\alpha$ and compute the relevant expectation.

Lastly, a research area of potential interest to expand the Stein's method on $\S^1$ is the use of exchangeable pairs in order to bound the Wasserstein metric. Meckes in \cite{meckes2009approximate} discusses a method which generates exchangeable pairs on general manifold for uniform random variables. This utilises the idea of geodesic flow on the tangent bundle $TM$ of manifold $M$. The geodesic flow gives rise to a 
to a set of PDEs which one can solve to generate an exchangeable pair. This construction can also be thought of as Newtonian dynamics of a free particle on $M$ with a potential of $\log p$ where $p$ is the probability density. In the uniform case, this potential term is constant. But for the von-Mises distribution, we end up with a term that resembles the potential term in the harmonic motion of a pendulum.


\textbf{Acknowledgements:} I am eternally grateful to my supervisors Huling Le, Karthik Bharath and Chris Fallaize for the help that they have provided while writing this paper and throughout my PhD. I would also like to thank the EPSRC for funding my PhD research project.

\end{document}